\newcommand{\calX}{{\mathcal{X}}}
\newcommand{\calO}{{\mathcal{O}}}
\newcommand{\calA}{{\mathcal{A}}}
\newcommand{\calB}{{\mathcal{B}}}
\newcommand{\calQ}{\mathcal{Q}}
\newcommand{\calE}{\mathcal{E}}
\newcommand{\calF}{\mathcal{F}}
\newcommand{\A}{\mathbf{A}}
\newcommand{\Z}{\mathbf{Z}}
\newcommand{\N}{\mathbf{N}}
\newcommand{\F}{\mathbf{F}}
\renewcommand{\P}{\mathbf{P}}
\newcommand{\Spec}{{\mathrm{Spec}}}
\newcommand{\Fun}{\mathrm{Fun}}
\newcommand{\Cartier}{\mathrm{Cartier}}
\newcommand{\Frob}{\mathrm{Frob}}
\newcommand{\Mod}{\mathrm{Mod}}
\newcommand{\R}{\mathrm{R}}
\newcommand{\Ch}{\mathrm{Ch}}
\newcommand{\zar}{\mathrm{Zar}}
\newcommand{\Fil}{\mathrm{Fil}}
\newcommand{\gr}{\mathrm{gr}}
\newcommand{\dR}{\mathrm{dR}}
\newcommand{\crys}{\mathrm{crys}}
\newcommand{\conj}{\mathrm{conj}}
\newcommand{\opp}{\mathrm{opp}}
\newcommand{\fram}{\mathfrak{m}}
\newcommand{\frap}{\mathfrak{p}}
\newcommand{\comment}[1]{}
\newcommand{\cosimp}[3]{\xymatrix@1{#1 \ar@<.4ex>[r] \ar@<-.4ex>[r] & {\ }#2 \ar@<0.8ex>[r] \ar[r] \ar@<-.8ex>[r] & {\ } #3 \ar@<1.2ex>[r] \ar@<.4ex>[r] \ar@<-.4ex>[r] \ar@<-1.2ex>[r] & \cdots }}
\newcommand{\colim}{\mathop{\mathrm{colim}}}
\begin{document}
\bibliographystyle{alpha}
\newtheorem{theorem}{Theorem}[section]
\newtheorem*{theorem*}{Theorem}
\newtheorem*{condition*}{Condition}
\newtheorem*{definition*}{Definition}
\newtheorem{proposition}[theorem]{Proposition}
\newtheorem{lemma}[theorem]{Lemma}
\newtheorem{corollary}[theorem]{Corollary}
\newtheorem{claim}[theorem]{Claim}

\theoremstyle{definition}
\newtheorem{definition}[theorem]{Definition}
\newtheorem{question}[theorem]{Question}
\newtheorem{remark}[theorem]{Remark}
\newtheorem{guess}[theorem]{Guess}
\newtheorem{example}[theorem]{Example}
\newtheorem{condition}[theorem]{Condition}
\newtheorem{warning}[theorem]{Warning}
\newtheorem*{notation*}{Notation}
\newtheorem{construction}[theorem]{Construction}

\title{Torsion in the crystalline cohomology of singular varieties}
\author{Bhargav Bhatt}

\begin{abstract}
This note discusses some examples showing that the crystalline cohomology of even very mildly singular projective varieties tends to be quite large. In particular, any singular projective variety with at worst ordinary double points has infinitely generated crystalline cohomology in at least two cohomological degrees. These calculations rely critically on comparisons between crystalline and derived de Rham cohomology.
\end{abstract}

\maketitle

Fix an algebraically closed field $k$ of characteristic $p > 0$ with ring of Witt vectors $W$. Crystalline cohomology is a $W$-valued cohomology theory for varieties over $k$ (see \cite{GrothendieckCrystals,Berthelot}). It is exceptionally well behaved on proper smooth $k$-varieties: the $W$-valued theory is finite dimensional \cite{BerthelotOgusBook}, and the corresponding $W[1/p]$-valued theory is a Weil cohomology theory \cite{KatzMessing} robust enough to support a $p$-adic proof of the Weil conjectures \cite{KedlayaWeil2}.

Somewhat unfortunately, crystalline cohomology is often large and somewhat unwieldy outside the world of proper smooth varieties. For example, the crystalline cohomology of a smooth affine variety of dimension $> 0$ is always infinitely generated as a $W$-module by the Cartier isomorphism (see Remark \ref{rmk:localinfiniteness}). Even worse, it is not a topological invariant: Berthelot and Ogus showed \cite[Appendix (A.2)]{BerthelotOgus} that the $0^\mathrm{th}$ crystalline cohomology group of a fat point in $\A^2$ has torsion (see also Example \ref{ex:infinitesimalcurves}). In this note, we give more examples of such unexpected behaviour:

\begin{theorem*}
Let $X$ be a proper lci $k$-variety. Then the crystalline cohomology of $X$ is infinitely generated if any of the following conditions is satisfied:
\begin{enumerate}
\item $X$ has at least one isolated toric singularity, such as a node on a curve.
\item $X$ has at least one conical singularity of low degree, such as an ordinary double point of any dimension.
\end{enumerate}
\end{theorem*}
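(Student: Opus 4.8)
The plan is to replace crystalline cohomology by derived de Rham cohomology, extract infinitely many classes from the conjugate filtration that are concentrated at the singular point, and then argue --- using the extra symmetry provided by the hypotheses --- that these classes survive.

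\textbf{Step 1: reduce to derived de Rham cohomology.} The first move is to invoke the comparison between crystalline and derived de Rham cohomology: for a finite type $k$-scheme $X$ one has $R\Gamma_{\mathrm{crys}}(X/W)\otimes^{\L}_W k\simeq R\Gamma(X, L\Omega_{X/k})=:R\Gamma_{\mathrm{dR}}(X/k)$. Since $X$ is proper and lci, $L_{X/k}$ is perfect and is a vector bundle away from the finite singular locus, and from this one checks that $L\Omega_{X/k}$ is connective and that $R\Gamma_{\mathrm{dR}}(X/k)$ has cohomology in only finitely many degrees; as $R\Gamma_{\mathrm{crys}}(X/W)$ is derived $p$-complete with this as its reduction mod $p$, it is a bounded complex of $W$-modules. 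Hence if all the groups $H^n_{\mathrm{crys}}(X/W)$ were finitely generated over $W$, the complex $R\Gamma_{\mathrm{crys}}(X/W)$ would be perfect over the discrete valuation ring $W$, so $R\Gamma_{\mathrm{dR}}(X/k)$ would be a bounded complex with finite dimensional cohomology. Thus it suffices to prove that $\bigoplus_n H^n_{\mathrm{dR}}(X/k)$ is infinite dimensional over $k$.

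\textbf{Step 2: conjugate filtration and localization at the singularity.} Next I would use the conjugate filtration on $L\Omega_{X/k}$, whose $i$-th graded piece is $(\wedge^i L_{X^{(1)}/k})[-i]$; the Frobenius twist $X^{(1)}$ is canonically isomorphic to $X$ since $k$ is perfect, so it can be suppressed. Away from the singular locus $L_{X/k}$ is a rank $\dim X$ vector bundle, so $\wedge^i L_{X/k}$ vanishes there for $i>\dim X$; for such $i$ the complex $\wedge^i L_{X/k}$ is supported on the finite set $Z=X^{\mathrm{sing}}$, and
\[
R\Gamma\bigl(X,\gr^{\mathrm{conj}}_i L\Omega_{X/k}\bigr)\;\simeq\;\bigoplus_{x\in Z}\bigl(\wedge^i L_{\widehat{\mathcal O}_{X,x}/k}\bigr)[-i],
\]
which depends only on the complete local rings at the singular points. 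The problem now splits into: (a) showing these derived exterior powers are nonzero for all $i$, with cohomology in a bounded (in $i$) range of degrees; and (b) showing the resulting classes survive the conjugate spectral sequence.

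\textbf{Step 3: the local non-vanishing.} For (a), write $A=\widehat{\mathcal O}_{X,x}=R/I$ as a complete intersection in a regular ring $R$, so that $L_{A/k}\simeq[I/I^2\to\Omega^1_{R/k}\otimes A]$ is a length-two complex of finite free modules and each $\wedge^i L_{A/k}$ is computed by the attached Koszul-type complex built from the Jacobian matrix. For a conical singularity (in particular an ordinary double point --- the affine cone over a smooth quadric) or an isolated toric singularity, this complex is, for $i>\dim X$, a shift of the Koszul cochain complex on the partial derivatives of the defining equation, whose cohomology in the extreme degree is the (nonzero) Tjurina algebra $A/(\partial f)$, and in the next degree is the (nonzero) group $\mathrm{Tor}^R_1(k,A)$ (for a node or an ordinary double point these are each one-dimensional). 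In every case $\wedge^i L_{A/k}\neq 0$ for all $i$, and after the $[-i]$ shift its cohomology sits in a fixed finite range of total degrees --- degrees $\dim X$ and $\dim X+1$ for a hypersurface singularity --- independent of $i$. So the conjugate spectral sequence has infinitely many nonzero columns, all feeding into a fixed, finite set of total degrees.

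\textbf{Step 4 (the main obstacle): non-degeneration.} Infinitely many nonzero graded pieces do not yet force $H^*_{\mathrm{dR}}(X/k)$ to be infinite dimensional --- since each singular-point contribution is finite dimensional, the differentials of the conjugate spectral sequence could in principle cancel all but finitely many of them --- and preventing this is exactly what the hypotheses are for. The mechanism I expect to use is a weight argument: the torus action on an isolated toric singularity, or the $\mathbf{G}_m$-action scaling a conical one, makes the conjugate filtration equivariant, hence so are the columns of the conjugate spectral sequence supported at the singular point, and its differentials preserve the induced weight grading. One then computes that the cohomology of $\wedge^i L_{\widehat{\mathcal O}_{X,x}/k}$ occurs in weights of the form $ci+O(1)$ for a positive constant $c$, with the $O(1)$-window independent of $i$; when the window is narrow enough --- automatic in the toric case, and precisely what ``low degree'' ensures in the conical case (for an ordinary double point the window has width zero, so $\wedge^i L$ sits in a single weight proportional to $i$) --- the weights appearing in distinct columns of the relevant part of the spectral sequence are disjoint in each fixed total degree, so all differentials among those columns vanish. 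Hence infinitely many of the Tjurina-algebra classes of Step 3 survive to $E_\infty$ in one cohomological degree, making $H^*_{\mathrm{dR}}(X/k)$ infinite dimensional and, by Step 1, $H^*_{\mathrm{crys}}(X/W)$ infinitely generated. I expect this last step --- tracking weights through the conjugate spectral sequence and certifying that ``low degree'' keeps the windows separated --- to be the technical heart; Steps 1--3 are essentially formal given the comparison theorem and standard properties of the cotangent complex.
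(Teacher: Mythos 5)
Your overall architecture --- reduce to derived de Rham cohomology via base change from $W$ to $k$, use the conjugate filtration, observe that the columns $\wedge^n L_{X^{(1)}/k}[-n]$ for $n$ above the embedding dimension are finite-length and supported at the singular point, verify their non-vanishing via the Koszul presentation of $L_{A/k}$, and then kill the differentials among the high columns by a grading argument --- is exactly the paper's strategy for the conical case. Your Step 4 for cones is precisely the paper's Lemma \ref{lem:cchomoghyp} (5) and Proposition \ref{prop:odpconjdiff}: the $\G_m$-weights of the Tjurina-type module $M$ occupy a window of width $(d-2)(N+1)$, consecutive columns are twisted by $d$, and the inequality $N(d-2) < d+2$ is exactly what forces the windows in distinct columns to be disjoint, so all graded extensions are nullhomotopic. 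Your bookkeeping of differentials into the finitely many low columns (which can only absorb a finite-dimensional amount) is also consistent with the paper's device of passing to the quotient $\calQ = \dR_{X/k}/\Fil^\conj_N$.

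The genuine gap is in the toric case. You assert that the weight separation is ``automatic in the toric case,'' but this is not proved and is not obvious: for a general isolated lci toric singularity the conormal module is generated by binomials of possibly different multidegrees, the cohomology of $\wedge^n L_{A/k}$ occupies a multigraded region that is a union over partitions of $n$, and one would have to verify that these regions for distinct columns never meet in the relevant cohomological degrees. Even for the toric hypersurface $xy=z^m$ this reduces to a numerical inequality ($2d$ versus the Tjurina window width, in a weighted grading) that happens to hold but must be checked; nothing makes it formal. The paper sidesteps this entirely by a different mechanism: a toric ring lifts to $W_2$ together with a lift of Frobenius (multiplication by $p$ on the monoid), and Theorem \ref{thm:ddrsummary} (4) then splits the whole conjugate filtration canonically, with no weight analysis at all. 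This is both cleaner and strictly more robust (it also covers the node $k[x,y]/(xy)$, which is not a monoid algebra of a submonoid of a lattice in the naive sense but still carries the monoid Frobenius). To repair your argument you would either need to prove the multigraded disjointness claim for the class of toric singularities you allow, or replace Step 4 in the toric case by the Frobenius-lifting splitting.
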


The statement above is informal, and we refer the reader to the body of the note ---  see examples \ref{ex:nodalcurve}, \ref{ex:toricsing}, \ref{ex:odp}, and \ref{ex:curvecone}  --- for precise formulations. In contrast to the Berthelot-Ogus example, our examples are reduced and lci. We do not know whether to regard these calculations as pathological or indicative of deeper structure; see Question \ref{ques:cryscohfgq}.

Perhaps more interesting than the theorem above is our method, which relies on Illusie's derived de Rham cohomology \cite{IllusieCCLNM2}. This theory, which in hindsight belongs to derived algebraic geometry, is a refinement of classical de Rham cohomology that works better for singular varieties; the difference, roughly, is the replacement of the cotangent sheaf with the cotangent complex. Theorems from \cite{Bhattpadicddr} show: (a) derived de Rham cohomology agrees with crystalline cohomology  for lci varieties, and (b) derived de Rham cohomology is computed by a ``conjugate'' spectral sequence whose $E_2$-terms come from {\em coherent} cohomology on the Frobenius twist. These results transfer calculations from crystalline cohomology to coherent cohomology, where it is much easier to localise calculations at the singularities (see the proof of Proposition \ref{prop:metaex}).  As a bonus, this method yields a natural (infinite) increasing bounded below exhaustive filtration with finite-dimensional graded pieces on the crystalline cohomology of {\em any} lci proper variety.

\subsection*{Organisation of this note} In \S \ref{sec:ddrsummary}, we review the relevant results from derived de Rham cohomology together with the necessary categorical background. Next, we study (wedge powers of) the cotangent complex of some complete intersections in \S \ref{sec:lcicccomp}. This analysis is used in \S \ref{ss:frobliftex} to provide examples of some singular projective varieties (such as nodal curves, or lci toric varieties) whose crystalline cohomology is always infinitely generated; all these examples admit local lifts to $W_2$ where Frobenius also lifts. Examples which are not obviously liftable (such as ordinary double points in high dimensions) are discussed in \S \ref{ss:odpex}. We conclude with the aforementioned Question \ref{ques:cryscohfgq}.

\subsection*{Notation.}
Let $k$ and $W$ be as above, and set $W_2 = W/p^2$. For a $k$-scheme $X$, let $X^{(1)}$ denote the Frobenius-twist of $X$; we identify the \'etale topology on $X$ and $X^{(1)}$. We use $H^n_\crys(X/k)$ and $H^n_\crys(X/W)$ to denote Berthelot's crystalline cohomology groups relative to $k$ and $W$ respectively. All sheaves are considered with respect to the Zariski topology (unless otherwise specified), and all tensor products are derived. We say that $X$ lifts to $W_2$ compatibly with Frobenius if there exists a flat $W_2$-scheme $\calX$ lifting $X$, and a map $\calX \to \calX$ lifting the Frobenius map on $X$ and lying over the canonical Frobenius lift on $W_2$. For fixed integers $a \leq b \in \Z$, we say that a complex $K$ over some abelian category has amplitude in $[a,b]$ if $H^i(K) = 0$ for $i \notin [a,b] \subset \Z$. A complex $K$ of abelian groups is connected (resp. simply connected) if $H^i(K) = 0$ for $i > 0$ (resp. for $i \geq 0$). An infinitely generated module over a ring is one that is not finitely generated.  All gradings are indexed by $\Z$ unless otherwise specified. If $A$ is a graded ring, then $A(-j)$ is the graded $A$-module defined $A(-j)_i = A_{i-j}$; we set $M(-j) := M \otimes_A A(-j)$ for any graded $A$-complex $M$. We use $\Delta$ for the category of simplices, and $\Ch(\calA)$ for the category of chain complexes over an abelian category $\calA$.

\subsection*{Acknowledgements.} I thank Johan de Jong, Davesh Maulik, and Mircea Musta\c{t}\v{a} for inspiring conversations. In particular, Example \ref{ex:nodalcurve} was discovered in conversation with de Jong and Maulik, and was the genesis of this note. Both Pierre Berthelot and Arthur Ogus had also independently calculated a variant of this example (unpublished), and I thank them for their prompt response to email inquiries.

\section{Review of derived de Rham theory}
\label{sec:ddrsummary}

In this section, we summarise some structure results in derived de Rham theory that will be relevant in the sequel. We begin by recalling in \S \ref{ss:homalgreview} some standard techniques for working with filtrations in the derived category; this provides the language necessary for the work in \cite{Bhattpadicddr} reviewed in \S \ref{ss:ddrsummaryconjfilt}.

\subsection{Some homological algebra}
\label{ss:homalgreview}

In the sequel, we will discuss filtrations on objects of the derived category. To do so in a homotopy-coherent manner, we use the following model structure:

\begin{construction}
\label{cons:modelstr}
Fix a small category $I$, a Grothendieck abelian category $\calB$, and set $\calA = \Fun(I,\calB)$. We endow $\Ch(\calB)$ with the model structure of \cite[Proposition 1.3.5.3]{LurieHA}: the cofibrations are termwise monomorphisms, while weak equivalences are quasi-isomorphisms. The category $\Fun(I,\Ch(\calB)) = \Ch(\Fun(I,\calB)) = \Ch(\calA)$ inherits a projective model structure by \cite[Proposition A.2.8.2]{LurieHT} where the fibrations and weak equivalences are defined termwise. By \cite[Proposition A.2.8.7]{LurieHT}, the pullback $D(\calB) \to D(\calA)$ induced by the constant map $I \to \{1\}$ has a left Quillen adjoint  $D(\calA) \to D(\calB)$ that we call a ``homotopy-colimit over $I$''.  In fact, exactly the same reasoning shows: given a map $\phi:I \to J$ of small categories, the pullback $\phi^*:D(\Fun(J,\calB)) \to D(\Fun(I,\calB))$ induced by composition with $\phi$ has a left Quillen adjoint $\phi_!:D(\Fun(I,\calB)) \to D(\Fun(J,\calB))$ if $\Ch(\Fun(I,\calB))$ and $\Ch(\Fun(J,\calB))$ are given the projective model structures as above; we often refer to $\phi_!$ as a ``homotopy-colimit along fibres of $\phi$.'' The most relevant examples of $\phi$ for us are: the projections $\Delta^\opp \to \{1\}$, $\Delta^\opp \times \N \to \N$ and $\N \to \{1\}$.
\end{construction}

Using Construction \ref{cons:modelstr}, we can talk about increasing filtrations on objects of derived categories.

\begin{construction}
\label{cons:filtrationsdercat}
Let $\calB$ be a Grothendieck abelian category, and let $\calA := \Fun(\N,\calB)$, where $\N$ is the category associated to the poset $\N$ with respect to the usual ordering. There is a homotopy-colimit functor $F:D(\calA) \to D(\calB)$ which is left Quillen adjoint to the pullback $D(\calB) \to D(\calA)$ induced by the constant map $\N \to \{1\}$; we informally refer to an object $K \in D(\calA)$ as an increasing (or $\N$-indexed) exhaustive filtration on the object $F(K) \in D(\calB)$. There are also restriction functors $[n]^*:D(\calA) \to D(\calB)$ for each $n \in \N$, and maps $[n]^* \to [m]^*$ for $n \leq m$ coherently compatible with composition. For each $n \in \N$, the cone construction defines a functor $\gr_n:D(\calA) \to D(\calB)$ and an exact triangle $[n-1]^* \to [n]^* \to \gr_n$ of functors $D(\calA) \to D(\calB)$; for a filtered object $K \in D(\calB)$, we often use $\gr_n(K)$ to denote $\gr_n$ applied to the specified lift of $K$ to $D(\calA)$. A map $K_1 \to K_2$ in $D(\calA)$ is an equivalence if and only if $[n]^* K_1 \to [n]^* K_2$ is so for all $n \in \N$ if and only if $\gr_n(K_1) \to \gr_n(K_2)$ is so for all $n \in \N$. Given a cochain complex $K$ over $\calB$, the association $n \mapsto \tau_{\leq n} K$ defines an object of $D(\calA)$ lifting the image of $K \in D(\calB)$ under $F$. 
\end{construction}

\begin{remark}
The ``cone construction'' used in Construction \ref{cons:filtrationsdercat} to define $\gr_n$ needs clarification: there is no functor  $\Fun([0 \to 1],D(\calB)) \to D(\calB)$ which incarnates the chain-level construction of the cone. However, the same construction {\em does} define a functor $D(\Fun([0 \to 1],\calB)) \to D(\calB)$, which suffices for the above application (as there are restriction functors $D(\calA) \to D(\Fun([0 \to 1],\calB))$ for each map $[0 \to 1] \to \N$ in $\N$).
\end{remark}

\subsection{The derived de Rham complex and the conjugate filtration}
\label{ss:ddrsummaryconjfilt}

We first recall the definition:

\begin{construction}
For a morphism $f:X \to S$ of schemes, following \cite[\S VIII.2]{IllusieCCLNM2}, the derived de Rham complex $\dR_{X/S} \in \Ch(\Mod_{f^{-1} \calO_S})$ is defined as the homotopy-colimit over $\Delta^\opp$ of the simplicial cochain complex $\Omega^*_{P_\bullet/f^{-1} \calO_S} \in \Fun(\Delta^\opp,\Ch(\Mod_{f^{-1} \calO_S}))$, where $P_\bullet$ is a simplicial free $f^{-1} \calO_S$-algebra resolution of $\calO_X$. When $S$ is an $\F_p$-scheme, the de Rham differential is linear over the $p^{\mathrm{th}}$-powers, so $\dR_{X/S}$ can be viewed as an object of $\Ch(\Mod_{\calO_{X^{(1)}}})$, where $X^{(1)} = X \times_{S,\Frob} S$ is the (derived) Frobenius-twist of $X$ (which is the usual one if $f$ is flat).
\end{construction} 

The following theorem summarises the relevant results from \cite{Bhattpadicddr} about this construction:

\begin{theorem}
\label{thm:ddrsummary}
Let $X$ be a $k$-scheme. Then:
\begin{enumerate}
\item The complex $\dR_{X/k} \in \Ch(\Mod_{\calO_{X^{(1)}}})$ comes equipped with a canonical increasing bounded below separated exhaustive filtration $\Fil^\conj_\bullet$ called the {\em conjugate} filtration. The graded pieces are computed by
\[ \Cartier_i:\gr^\conj_i(\dR_{X/k}) \simeq \wedge^i L_{X^{(1)}/k}[-i].\]
In particular, if $X$ is lci, then $\Fil^\conj_i(\dR_{X/k})$ is a perfect $\calO_{X^{(1)}}$-complex for all $i$.
\item The formation of $\dR_{X/k}$ and the conjugate filtration commutes with \'etale localisation on $X^{(1)}$.
\item There exists a canonical morphism
\[ \R\Gamma(X^{(1)},\dR_{X/k}) \to \R\Gamma_\crys(X/k,\calO)\]
that is an isomorphism when $X$ is an lci $k$-scheme.
\item If there is a lift of $X$ to $W_2$ together with a compatible lift of Frobenius, then the conjugate filtration is split, i.e., there is an isomorphism
\[ \oplus_{i \geq 0} \wedge^i L_{X^{(1)}/k}[-i] \simeq \dR_{X/k}\]
whose restriction to the $i^\mathrm{th}$ summand on the left splits $\Cartier_i$.
\end{enumerate}
\end{theorem}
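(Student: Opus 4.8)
To prove (4), the plan is to carry out the Deligne--Illusie construction of the Cartier splitting in the derived setting, with the Frobenius lift $F$ supplying the datum that trivializes the relevant obstruction. Since the conjugate filtration is exhaustive and bounded below, it suffices to build, for each $i \geq 0$, a map $s_i : \wedge^i L_{X^{(1)}/k}[-i] \to \Fil^\conj_i(\dR_{X/k})$ whose composite with the projection onto $\gr^\conj_i$ is $\Cartier_i^{-1}$, together with enough compatibility among the $s_i$ that $\oplus_i s_i$ is a map of filtered objects; such a map induces $\oplus_i \Cartier_i$ on graded pieces and is therefore an equivalence. By part (2) the construction need only be carried out \'etale-locally on $X^{(1)}$, provided it is done functorially enough to glue; the aim would be to produce it functorially in the pair $(\mathcal{X}, F)$ of a flat $W_2$-lift and a compatible Frobenius lift, so that gluing becomes automatic.

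The essential case is $i = 1$. Factor $F : \mathcal{X} \to \mathcal{X}$ (over the canonical lift $F_{W_2}$ on $W_2$) through the base change $\mathcal{X}^{(1)} = \mathcal{X} \times_{W_2, F_{W_2}} W_2$ to obtain a lift $F_{\mathcal{X}/W_2} : \mathcal{X} \to \mathcal{X}^{(1)}$ of the relative Frobenius. The induced map $F_{\mathcal{X}/W_2}^{*} L_{\mathcal{X}^{(1)}/W_2} \to L_{\mathcal{X}/W_2}$ reduces modulo $p$ to the map on cotangent complexes attached to the relative Frobenius of $X/k$, which vanishes because Frobenius annihilates differentials; flatness of $\mathcal{O}_\mathcal{X}$ over $W_2 = W/p^2$ then allows ``division by $p$'', producing the derived inverse Cartier operator $L_{X^{(1)}/k} \to L_{X/k}$. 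To make this precise I would realize $\dR_{X/k}$ via a simplicial polynomial resolution $P_\bullet \to \mathcal{O}_X$, lift it to a simplicial polynomial resolution $\widetilde P_\bullet \to \mathcal{O}_\mathcal{X}$ over $W_2$ carrying a compatible Frobenius lift --- so that term by term the modules of differentials are free over $W_2$, the division by $p$ is literal, and the construction is exactly the classical Deligne--Illusie recipe --- and then pass to the homotopy colimit over $\Delta^\opp$; this yields $s_1 : L_{X^{(1)}/k}[-1] \to \Fil^\conj_1(\dR_{X/k})$ together with its coherence data.

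For $i \geq 2$ the plan is to bootstrap from $s_1$ using multiplicativity of the conjugate filtration: $s_1^{\otimes i}$ followed by the product $(\Fil^\conj_1)^{\otimes i} \to \Fil^\conj_i$ gives $(L_{X^{(1)}/k}[-1])^{\otimes i} \to \Fil^\conj_i(\dR_{X/k})$, which should factor through the appropriate derived symmetrization to yield $s_i$, the compatibilities being forced by associativity; here one must reconcile the derived exterior/symmetric/divided-power formalism in characteristic $p$ (Illusie's ``d\'ecalage'') with the given identification $\gr^\conj_i \simeq \wedge^i L_{X^{(1)}/k}[-i]$. I expect the main obstacle to be precisely the homotopy-coherence of the whole package --- upgrading a splitting valid in the homotopy category to one realized in $D(\Fun(\N,\calB))$ as in Construction \ref{cons:filtrationsdercat}. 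In classical Deligne--Illusie the analogous step is a \v{C}ech cocycle computation showing that locally-defined Cartier splittings differ by explicit coboundaries; in the present setting it must be replaced either by a genuinely functorial choice of lifted resolution-with-Frobenius, so that the functors $\phi_!$ of Construction \ref{cons:modelstr} perform the gluing, or by a direct proof that the obstruction classes --- the class of the extension $\Fil^\conj_1$ in $\Ext^2(L_{X^{(1)}/k}, \mathcal{O}_{X^{(1)}})$ and its higher-degree analogues governing the $s_i$ for $i \geq 2$ --- vanish in the presence of a Frobenius lift. The remaining steps are formal manipulations in the filtered derived category of \S\ref{ss:homalgreview}.
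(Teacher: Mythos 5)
The first thing to say is that the paper contains no proof of Theorem \ref{thm:ddrsummary} to compare against: it is explicitly presented as a summary of results imported from \cite{Bhattpadicddr}, and is used as a black box throughout. The second is that your proposal addresses only part (4); parts (1)--(3) are untouched, and part (1) in particular (the construction of the conjugate filtration and of the derived Cartier isomorphism by left Kan extension from polynomial algebras) is a prerequisite for everything you write.

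For part (4) your sketch does follow the strategy actually used in the cited source: reduce to the affine case, split $\Cartier_1$ by the Deligne--Illusie ``divide $d\tilde{F}$ by $p$'' construction applied termwise to a simplicial resolution by polynomial algebras carrying Frobenius lifts, then get the splittings of $\Cartier_i$ for $i \geq 2$ from multiplicativity of the conjugate filtration (termwise on a polynomial resolution $\gr^\conj_i$ is literally the $i$-th exterior power of $\gr^\conj_1$, suitably shifted, so the derived d\'ecalage worries you raise do not actually arise). The one concrete soft spot is the sentence where you ``lift $P_\bullet \to \calO_X$ to a simplicial polynomial resolution $\widetilde{P}_\bullet \to \calO_{\calX}$ over $W_2$ carrying a compatible Frobenius lift'': functoriality of the canonical free resolution does \emph{not} provide this. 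The endomorphism of the free algebra $W_2[\tilde{A}]$ induced by $\tilde{F}$ sends a polynomial generator $[a]$ to $[\tilde{F}(a)]$, which does not reduce to the Frobenius $[a] \mapsto [a]^p$ of $k[\tilde{A}]$, so the terms of the naive lifted resolution are not Frobenius-lifted polynomial algebras and the classical division-by-$p$ recipe does not apply to them. The fix --- and the place where the real work happens in \cite{Bhattpadicddr} --- is to resolve the pair $(\tilde{A},\tilde{F})$ by free objects in the category of flat $W_2$-algebras equipped with a Frobenius lift; free objects of that category are polynomial algebras with an honest Frobenius lift, the classical construction applies to them termwise, and the functoriality of this resolution is exactly what makes the splittings glue and assemble into a map of filtered objects in the sense of Construction \ref{cons:filtrationsdercat}, answering the homotopy-coherence concern you correctly flag at the end. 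With that replacement your outline is the standard proof; the alternative you mention via vanishing of the extension class of $\Fil^\conj_1$ in $\Ext^2(L_{X^{(1)}/k},\calO_{X^{(1)}})$ is also viable for $i=1$ but does not by itself produce the compatible system of splittings for all $i$.
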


\begin{remark}
Theorem \ref{thm:ddrsummary} can be regarded as an analogue of the results of Cartier (as explained in \cite{DeligneIllusie}, say) and Berthelot \cite{Berthelot} to the singular case. In particular, when $X$ is quasi-compact, quasi-separated and lci, parts (1) and (3) of Theorem \ref{thm:ddrsummary} together with the end of Remark \ref{rmk:ddrcolimits} yield  a ``conjugate'' spectral sequence
\[ E_2^{p,q}: H^p(X^{(1)},\wedge^q L_{X^{(1)}/k}) \Rightarrow H^{p+q}_\crys(X/k).\]
In the sequel, instead of using this spectral sequence, we will directly use the filtration on $\dR_{X/k}$ and the associated exact triangles; this simplifies bookkeeping of indices.
\end{remark}

\begin{remark}
\label{rmk:ddrcolimits}
We explain the interpretation of Theorem \ref{thm:ddrsummary} using the language of \S \ref{ss:homalgreview}. Let $\calB = \Mod(\calO_{X^{(1)}})$, and let $\calA = \Fun(\N,\calB)$. The construction of the derived de Rham complex $\dR_{X/k} \in D(\calB)$ naturally lifts to an object $\calE \in D(\calA)$ under $F$: if $P_\bullet \to \calO_X$ is the canonical free $k$-algebra resolution of $\calO_X$, then $\Omega^*_{P_\bullet/k} \otimes_{P_\bullet^{(1)}} \calO_{X^{(1)}}$ defines an object of $D(\Fun(\Delta^\opp \times \N,\calB))$ via $(m,n) \mapsto \big(\tau_{\leq n} \Omega^*_{P_m/k} \big) \otimes_{P_m^{(1)}} \calO_{X^{(1)}}$, and its homotopy-colimit over $\Delta^\opp$ (i.e., its pushforward along $D(\Fun(\Delta^\opp \times \N,\calB)) \to D(\Fun(\N,\calB))$) defines the desired object $\calE \in D(\calA)$. This construction satisfies $[n]^* \calE \simeq \Fil^\conj_n(\dR_{X/k})$, so $\gr_n(\calE) \simeq \gr^\conj_n(\dR_{X/k})$ for all $n \in \N$. This lift $\calE \in D(\calA)$ of $\dR_{X/k} \in D(\calB)$ is implicit in any discussion of the conjugate filtration on $\dR_{X/k}$ in this note (as in Theorem \ref{thm:ddrsummary} (1), for example). In the sequel, we abuse notation to let $\dR_{X/k}$ also denote $\calE \in D(\calA)$. When $X$ is quasi-compact and quasi-separated, cohomology commutes with filtered colimits, so $\R\Gamma(X^{(1)},\dR_{X/k}) \simeq \colim_n \R\Gamma(X^{(1)},\Fil^\conj_n(\dR_{X/k}))$. In particular, when restricted to proper varieties, derived de Rham cohomology can be written as a filtered colimit of (complexes of) finite dimensional vector spaces {\em functorially} in $X$.
\end{remark}

\section{Some facts about local complete intersections}
\label{sec:lcicccomp}

In order to apply Theorem \ref{thm:ddrsummary} to compute crystalline cohomology, we need good control on (wedge powers of) the cotangent complex of an lci singularity. The following lemma collects most of the results we will use in \S \ref{ss:frobliftex}.

\begin{lemma}
\label{lem:wedgencc}
Let $(A,\fram)$ be an essentially finitely presented local $k$-algebra with an isolated lci singularity at $\{\fram\}$. Let $N = \dim_{k}(\fram/\fram^2)$ be the embedding dimension. Then:
\begin{enumerate}
\item $\wedge^n L_{A/k}$ is a perfect complex for all $n$. For $n \geq N$, $\wedge^n L_{A/k}$ can be represented by a complex of finite free $A$-modules lying between cohomological degrees $-n$ and $-n + N$ with differentials that are $0$ modulo $\fram$.
\item For any $n \geq N$, the complex $\wedge^n L_{A/k}$ has finite length cohomology groups.
\item For any $n > N$, the group $H^{-n+N}(\wedge^n L_{A/k})$ is non-zero.
\item For any $n > N$, there exists an integer $0 < i \leq N$ such that $H^{-n + N -i}(\wedge^n L_{A/k})$ is non-zero. 
\item If $\dim(A) > 0$ and $n > N$,  then $H^{-n}(\wedge^n L_{A/k}) = 0$, so the integer $i$ in (4) is strictly less than $N$.
\end{enumerate}
\end{lemma}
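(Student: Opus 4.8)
The plan is to replace $\wedge^n L_{A/k}$ by a very explicit finite free complex and then read off all five assertions from the shape of that complex, using the Cohen--Macaulay property of $A$ for the finiteness claims. First I would choose a minimal presentation $A\cong R/I$, where $R=k[x_1,\dots,x_N]_{(x_1,\dots,x_N)}$ is regular (essentially smooth over $k$, of dimension $N$) and $I\subseteq\fram_R^2$ is generated by a regular sequence $f_1,\dots,f_c$ with $c=N-\dim A$; here $c\geq 1$ precisely because $A$ is genuinely singular, so $\dim A\leq N-1$. The transitivity triangle for $k\to R\to A$ then identifies $L_{A/k}$ with the two-term complex $[\,I/I^2\xrightarrow{\,d\,}\Omega^1_{R/k}\otimes_R A\,]$ in cohomological degrees $[-1,0]$, with $I/I^2\cong A^c$ and $\Omega^1_{R/k}\otimes_R A\cong A^N$ finite free and $d$ the Jacobian matrix $(\partial f_i/\partial x_j)$, whose entries lie in $\fram_R$ since $I\subseteq\fram_R^2$. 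Feeding this into Illusie's d\'ecalage $\wedge^m(P[1])\simeq(\Gamma^m P)[m]$ for derived exterior powers of a two-term complex of flats \cite{IllusieCCLNM2}, I get that $\wedge^n L_{A/k}$ is computed by the complex $K^\bullet$ with $K^{-j}=\wedge^{\,n-j}(A^N)\otimes_A\Gamma^{\,j}(A^c)$ placed in cohomological degree $-j$ ($0\leq j\leq n$), all of whose differentials are induced by $d$ and hence are $0$ modulo $\fram$. Since $\wedge^{n-j}(A^N)=0$ once $n-j>N$, for $n\geq N$ this complex has finite free terms concentrated in degrees $[-n,-n+N]$, which gives (1).

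For (2) and (3) I would localize away from the closed point. On $U:=\Spec A\setminus\{\fram\}$ the ring is smooth over $k$, so $L_{A/k}|_U\simeq\Omega^1_{A/k}|_U$ is a vector bundle of rank $\dim A$ (the lci ring $A$ being Cohen--Macaulay, hence equidimensional); since $\dim A<N\leq n$ this forces $\wedge^n L_{A/k}|_U=0$, so $\wedge^n L_{A/k}$ is a perfect complex supported at $\{\fram\}$ and thus has finite length cohomology --- that is (2). For (3), the differential into $K^{-n+N}$ has image in $\fram K^{-n+N}$, so $H^{-n+N}(\wedge^n L_{A/k})$ surjects onto $K^{-n+N}\otimes_A k=\wedge^N(k^N)\otimes_k\Gamma^{\,n-N}(k^c)$, which is nonzero because $c\geq 1$.

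The crux is (4), and the idea is that $K^\bullet$ is \emph{minimal}: all of its differentials land in $\fram K^\bullet$, so $\wedge^n L_{A/k}\otimes_A^{\mathbf L}k\simeq K^\bullet\otimes_A k$ has zero differential and hence nonzero cohomology in each of the $N+1$ degrees $-n,\dots,-n+N$. I would then argue by contradiction: if the cohomology of $\wedge^n L_{A/k}$ were concentrated in the top degree $-n+N$, then $\wedge^n L_{A/k}\simeq M[n-N]$ with $M:=H^{-n+N}(\wedge^n L_{A/k})$ a nonzero (by (3)), finite length (by (2)) perfect $A$-module, whence Auslander--Buchsbaum plus $A$ Cohen--Macaulay give $\mathrm{pd}_A M=\mathrm{depth}\,A-\mathrm{depth}\,M=\dim A$, so that $M\otimes_A^{\mathbf L}k$, and therefore $\wedge^n L_{A/k}\otimes_A^{\mathbf L}k$, has nonzero cohomology in only $\dim A+1<N+1$ degrees --- contradicting the previous sentence. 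Hence the cohomology is not concentrated in degree $-n+N$, which together with (3) produces some $0<i\leq N$ with $H^{-n+N-i}(\wedge^n L_{A/k})\neq 0$.

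For (5), note that $\dim A>0$ forces $\mathrm{depth}\,A\geq 1$, so the finite free module $A^c\cong I/I^2$ has no nonzero finite-length submodule; but $\ker d=H^{-1}(L_{A/k})$ is finitely generated and supported at $\{\fram\}$ (vanishing on $U$), hence of finite length, so $\ker d=0$. Then $H^{-n}(\wedge^n L_{A/k})=\ker\bigl(K^{-n}\to K^{-n+1}\bigr)$ is the kernel of the composite $\Gamma^n(A^c)\to\Gamma^{\,n-1}(A^c)\otimes_A A^c\xrightarrow{\,1\otimes d\,}\Gamma^{\,n-1}(A^c)\otimes_A A^N$, whose first arrow, comultiplication, is injective --- it is the $A$-dual of the surjective multiplication $\Sym^{n-1}\!\bigl((A^c)^\vee\bigr)\otimes(A^c)^\vee\to\Sym^n\!\bigl((A^c)^\vee\bigr)$ --- and whose second arrow is injective since $\ker d=0$ and $\Gamma^{n-1}(A^c)$ is flat; hence $H^{-n}(\wedge^n L_{A/k})=0$ and the $i$ of (4) cannot equal $N$, i.e.\ $0<i<N$. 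I expect the main obstacle to be the very first step, namely pinning down the minimal model complex $K^\bullet$: running the d\'ecalage for exterior powers of a two-term complex \emph{and} verifying that every differential is divisible by the Jacobian entries of $d$. Everything after that is comparatively formal, the one real idea being the Auslander--Buchsbaum length count in (4), and the only subtlety being that ``isolated lci singularity'' gets used twice --- to know $\wedge^n L_{A/k}$ is supported at $\{\fram\}$, and to know $A$ is truly singular so that $\dim A<N$, the inequality behind (2), (3) and (4).
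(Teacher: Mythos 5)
Your proof is correct and follows essentially the same route as the paper: a minimal two-term free presentation of $L_{A/k}$ with Jacobian differential vanishing modulo $\fram$, the divided-power Koszul model for $\wedge^n$, localisation off $\{\fram\}$ for finite length, the count of cohomology groups of $\wedge^n L_{A/k}\otimes_A^{\mathbf L}k$ against Auslander--Buchsbaum for (4), and the depth/$S_1$ argument for (5). The only (harmless) variations are cosmetic: in (3) you surject onto $K^{-n+N}\otimes_A k$ rather than arguing by contradiction on the amplitude, and in (5) you apply the ``no finite-length submodule of a free module'' principle to $H^{-1}(L_{A/k})$ and propagate injectivity through the comultiplication, where the paper applies it directly to $H^{-n}(\wedge^n L_{A/k})\subseteq K^{-n}$.
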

\begin{proof}
Choose a polynomial algebra $P = k[x_1,\dots,x_N]$ and a map $P \to A$ such that $\Omega^1_{P/k} \otimes_P A \to \Omega^1_{A/k}$ is surjective. By comparing dimensions, the induced map $\Omega^1_{P/k} \otimes_P A \otimes_A A/\fram \to \Omega^1_{A/k} \otimes_A A/\fram$ is an isomorphism. Now consider the exact triangle
\[ L_{A/P}[-1] \to \Omega^1_{P/k} \otimes_P A \to L_{A/k}. \]
The lci assumption on $A$ and the choice of $P$ ensure that $L_{A/P}[-1]$ is a free $A$-module of some rank $r$. Since $\Spec(A)$ is singular at $\fram$, we must have $r > 0$. The previous triangle then induces a (non-canonical) equivalence 
\[ \Big(A^{\oplus r} \stackrel{T}{\to} A^{\oplus N} \Big) \simeq L_{A/k}.\]
The map $T$ must be $0$ modulo $\fram$ as $A^{\oplus N} \to L_{A/k}$ induces an isomorphism on $H^0$ after reduction modulo $\fram$, so  the above presentation yields an identification
\[ L_{A/k} \otimes_A k \simeq \big(k^{\oplus r}[1]\big) \oplus k^{\oplus N}.\]
Computing wedge powers gives
\begin{equation}
\label{eq:wedgencc}\tag{*}
\wedge^n(L_{A/k}) \otimes_A k \simeq \oplus_{a = 0}^N \Big(\wedge^a(k^{\oplus N}) \otimes \Gamma^{n-a}(k^{\oplus r})\Big) [n-a],
\end{equation}
where $\Gamma^*$ is the divided-power functor; here we use that $\wedge^n(V[1]) = \Gamma^n(V)[n]$ for a flat $k$-module $V$ over a ring $k$ (see \cite[\S 7]{QuillenCRCNotes}). We now show the desired claims:
\begin{enumerate}
\item The perfectness of $\wedge^n L_{A/k}$ follows from the perfectness of $L_{A/k}$. The desired representative complex can be constructed as a Koszul complex on the map $T$ above (see the proof of Lemma \ref{lem:cchomoghyp} (4) below); all differentials will be $0$ modulo $\fram$ by functoriality since $T$ is so.
\item We must show that $\big(\wedge^n L_{A/k}\big)_\frap = 0$ for any $\frap \in \Spec(A) - \{\fram\}$ and $n \geq N$. The functor $\wedge^n L_{-/k}$ commutes with localisation, so we must show that $\wedge^n L_{A_{\frap}/k} = 0$ for $\frap$ and $n$ as before, but this is clear: $A_{\frap}$ is the localisation of smooth $k$-algebra of dimension $\leq \dim(A) < N$ for any such $\frap$.
\item By (1),  $H^{-n + N}(\wedge^n L_{A/k}) = 0$ if and only if $\wedge^n L_{A/k}$ has amplitude in $[-n,-n + N-1]$. However, in the latter situation, the complex $\wedge^n L_{A/k} \otimes_A k$ would have no cohomology in degree $-n + N$, contradicting formula \eqref{eq:wedgencc}; note that $r \geq 1$ by the assumption that $\Spec(A)$ is singular at $\fram$.
\item Assume the assertion of the claim is false. Then (3) shows that $\wedge^n L_{A/k}$ is concentrated in a single degree, so $\wedge^n L_{A/k} \simeq M[-n + N]$ for some finite length $A$-module $M$. By (1), $M$ has finite projective dimension. The Auslander-Buschbaum formula and the fact that $A$ is Cohen-Macaulay then show that the projective dimension of $M$ is actually $\dim(A)$. Hence, $M \otimes_A k$ has at most $\dim(A) + 1$ non-zero cohomology groups. On the other hand, formula \eqref{eq:wedgencc} shows that $\wedge^n L_{A/k} \otimes_A k$ has $N+1$ distinct cohomology groups. Hence, $N \leq \dim(A)$, which contradicts the assumption that $\Spec(A)$ is singular at $\fram$.
\item Set $M := H^{-n}(\wedge^n L_{A/k})$, and assume $M \neq 0$. Then $M$ has finite length by (2), and occurs as the kernel of a map of free $A$-modules by (1). Non-zero finite length $R$-modules cannot be found inside free $R$-modules for any $S_1$-ring $R$ of positive dimension, which is a contradiction since complete intersections are $S_1$. \qedhere
\end{enumerate}
\end{proof}

\begin{remark}
The assumption $\dim(A) > 0$ is necessary in Lemma \ref{lem:wedgencc} (5). For example, set $A = k[\epsilon]/(\epsilon^p)$. Then $N = \dim_k(\fram/\fram^2) = 1$, and $L_{A/k} \simeq A[1] \oplus A$. Applying $\wedge^n$ for $n > 0$, we get
\[ \wedge^n(L_{A/k}) \simeq \Gamma^n(A)[n] \oplus \Gamma^{n-1}(A)[n-1],\]
which certainly has non-zero cohomology in degree $-n$.
\end{remark}

Using Lemma \ref{lem:wedgencc}, we can show that the crystalline cohomology of an isolated lci singularity is infinitely generated in a very strong sense:

\begin{corollary}
\label{cor:cryscohlcising}
Let $(A,\fram)$ be as in Lemma \ref{lem:wedgencc}. Assume that $A$ admits a lift to $W_2$ compatible with Frobenius. Then 
\begin{enumerate}
\item $H^i_\crys(\Spec(A)/k) \simeq \oplus_{j \geq 0} H^0(\Spec(A)^{(1)},\wedge^j L_{A^{(1)}/k}[i-j])$ for all $i$.
\item $H^N_\crys(\Spec(A)/k)$ is infinitely generated as an $A^{(1)}$-module.
\end{enumerate}
\end{corollary}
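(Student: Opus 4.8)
The plan is to obtain both statements as essentially formal consequences of Theorem~\ref{thm:ddrsummary} and Lemma~\ref{lem:wedgencc}, exploiting that $\Spec(A)$ is affine (since $A$ is local) to replace derived global sections by the underlying complex of modules.

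For (1), the first point is that since $k$ is perfect, $A^{(1)} = A \otimes_{k,\Frob} k$ is again an essentially finitely presented local $k$-algebra with an isolated lci singularity at its maximal ideal and of the same embedding dimension $N$; in particular $A$ is lci. Hence Theorem~\ref{thm:ddrsummary}(3) gives $\R\Gamma_\crys(\Spec(A)/k,\calO) \simeq \R\Gamma(\Spec(A)^{(1)},\dR_{A/k})$, and the Frobenius-compatible $W_2$-lift of $A$ lets us apply Theorem~\ref{thm:ddrsummary}(4) to obtain an $\calO_{\Spec(A)^{(1)}}$-linear equivalence $\dR_{A/k} \simeq \bigoplus_{j \geq 0} \wedge^j L_{A^{(1)}/k}[-j]$. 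Each $\wedge^j L_{A^{(1)}/k}$ is a perfect, hence quasi-coherent, $\calO_{\Spec(A)^{(1)}}$-complex by Theorem~\ref{thm:ddrsummary}(1), so over the (Noetherian) affine scheme $\Spec(A)^{(1)}$ the functor $\R\Gamma(\Spec(A)^{(1)},-)$ simply returns the underlying complexes of $A^{(1)}$-modules. Passing to cohomology and commuting it past the direct sum then yields
\[ H^i_\crys(\Spec(A)/k) \simeq \bigoplus_{j \geq 0} H^{i-j}\big(\wedge^j L_{A^{(1)}/k}\big) = \bigoplus_{j \geq 0} H^0\big(\Spec(A)^{(1)},\wedge^j L_{A^{(1)}/k}[i-j]\big) \]
as $A^{(1)}$-modules, which is (1). (One may note in passing that the bounds in Lemma~\ref{lem:wedgencc}(1) force $\dR_{A/k}$ to be concentrated in cohomological degrees $[0,N]$, though this is not needed.)

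For (2), specialise the displayed formula to $i = N$. For $j > N$, Lemma~\ref{lem:wedgencc}(1) shows $\wedge^j L_{A^{(1)}/k}$ has cohomology only in degrees $[-j,-j+N]$, so its contribution to $H^N_\crys(\Spec(A)/k)$ is exactly the top cohomology group $H^{-j+N}(\wedge^j L_{A^{(1)}/k})$, which is non-zero by Lemma~\ref{lem:wedgencc}(3) (valid since $A^{(1)}$ is singular at its maximal ideal). Therefore $H^N_\crys(\Spec(A)/k)$ contains $\bigoplus_{j > N} H^{-j+N}(\wedge^j L_{A^{(1)}/k})$ as a direct summand: an infinite direct sum of non-zero $A^{(1)}$-modules. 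No finite subset of such a module can generate it, since a finite generating set meets only finitely many of the summands, so $H^N_\crys(\Spec(A)/k)$ is infinitely generated as an $A^{(1)}$-module.

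I do not expect a genuine obstacle: the corollary is essentially a repackaging of the cited results. The two points that need a moment's care are (a) the identification of $\R\Gamma(\Spec(A)^{(1)},-)$ with the underlying-modules functor on quasi-coherent complexes over the affine $\Spec(A)^{(1)}$, which is what converts the sheaf-level splitting of Theorem~\ref{thm:ddrsummary}(4) into a splitting of crystalline cohomology groups, and (b) verifying that Lemma~\ref{lem:wedgencc} applies to $A^{(1)}$ in place of $A$, which relies on $k$ being perfect and on base change for the cotangent complex.
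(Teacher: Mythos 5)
Your proof is correct and follows the same route as the paper: part (1) is Theorem \ref{thm:ddrsummary} (3) and (4) combined with the vanishing of higher quasi-coherent cohomology on the affine $\Spec(A)^{(1)}$, and part (2) is the non-vanishing from Lemma \ref{lem:wedgencc} (3) applied to the infinitely many summands with $j > N$. The extra care you take (identifying $\R\Gamma$ with the underlying module, and checking that Lemma \ref{lem:wedgencc} applies to $A^{(1)}$, which holds since $k$ is perfect) is implicit in the paper's one-line argument.
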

\begin{proof}
Note that $H^*_\crys(\Spec(A)/k)$ is an $A^{(1)}$-module since any divided-power thickening of $A$ is an $A^{(1)}$-algebra.
\begin{enumerate}
\item This follows from Theorem \ref{thm:ddrsummary} (4) and the vanishing of higher quasi-coherent sheaf cohomology on affines.
\item This follows from Lemma \ref{lem:wedgencc} (3). \qedhere
\end{enumerate}
\end{proof}

\begin{remark}
\label{rmk:localinfiniteness}
Let us explain the phrase ``strong sense'' appearing before Corollary \ref{cor:cryscohlcising}. If $A$ is an essentially smooth $k$-algebra, then $H^*_\crys(\Spec(A)/k)$ is infinitely generated over $k$, but not over $A^{(1)}$: the Cartier isomorphism shows that $H^i_\crys(\Spec(A)/k) \simeq \Omega^i_{A^{(1)}/k}$, which is a finite (and even locally free) $A^{(1)}$-module. It is this latter finiteness that also breaks down in the singular setting of Corollary \ref{cor:cryscohlcising}.
\end{remark}

We also record a more precise result on the wedge powers of the cotangent complex for the special case of the co-ordinate ring of a smooth hypersurface; this will be used in \S \ref{ss:odpex}. 

\begin{lemma}
\label{lem:cchomoghyp}
Let $A$ be the localisation at $0$ of $k[x_0,\dots,x_N]/(f)$, where $f$ is a homogeneous degree $d$ polynomial defining a smooth hypersurface in $\P^N$. Assume $p \nmid d$. Then
\begin{enumerate}
\item $A$ is graded.
\item The quotient $M = A/(\frac{\partial f}{\partial x_0},\dots,\frac{\partial f}{\partial x_N})$ is a finite length graded $A$-module whose non-zero weights $j$ are contained in the interval $0 \leq j \leq (d-2)(N+1)$.
\item The $A$-linear Koszul complex $K := K_A(\{\frac{\partial f}{\partial x_i}\})$ of the sequence of partials is equivalent to $M \oplus M(-d)[1]$ as a graded $A$-complex.
\item For $n > N$, we have an equivalence of graded $A$-complexes
\[ \wedge^n L_{A/k}[-n] \simeq M\big( (N+1)(d-1) - nd \big)[-N - 1] \oplus M\big( (N+1)(d-1) - nd - d \big)[-N].\]
\item Assume that $N$ and $d$ satisfy $N(d-2) < d+2$. Fix $j$ and $n$ with $N < j < n$.  Then all graded $k$-linear maps
\[ \wedge^n L_{A/k}[-n] \to \wedge^{j} L_{A/k}[-j][1] \]
are nullhomotopic as graded $k$-linear maps.
\end{enumerate}
\end{lemma}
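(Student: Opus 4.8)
The plan is to recast the assertion as a $\Hom$-vanishing in the derived category of graded $k$-modules and then to establish it by comparing cohomological degrees and internal weights, using the explicit description from part (4). Since $\wedge^n L_{A/k}$ is perfect, it is represented by a bounded complex of finite free graded $A$-modules, hence by a bounded-above complex of free graded $k$-modules; so homotopy classes of graded $k$-linear chain maps $\wedge^n L_{A/k}[-n] \to \wedge^{j} L_{A/k}[-j][1]$ coincide with morphisms in the derived category $D$ of graded $k$-modules, and it suffices to show $\Hom_D\big(\wedge^n L_{A/k}[-n],\,\wedge^{j} L_{A/k}[-j][1]\big) = 0$. We may assume $d \geq 2$; for $d = 1$ the ring $A$ is regular, $M = 0$, and everything vanishes trivially.

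Next I would substitute part (4). Put $a_m := (N+1)(d-1) - md$; since $n, j > N$, part (4) gives equivalences $\wedge^m L_{A/k}[-m] \simeq M(a_m)[-N-1] \oplus M(a_m - d)[-N]$ in $D$ for $m \in \{n,j\}$. So, viewed in $D$, the source is supported in cohomological degrees $N$ and $N+1$ with degree-$N$ summand $M(a_n - d)$, while the target $\wedge^{j} L_{A/k}[-j][1]$ is supported in degrees $N-1$ and $N$ with degree-$N$ summand $M(a_j)$. Because graded $k$-modules form a semisimple abelian category, $\R\Hom$ between any two of them placed in cohomological degree $0$ is again concentrated in degree $0$. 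Hence, among the four components of our map relative to these decompositions, every one except the component from the degree-$N$ summand $M(a_n - d)$ to the degree-$N$ summand $M(a_j)$ lands in some $\Ext^{\neq 0}$ and vanishes automatically; we are reduced to killing the single graded $k$-linear map $M(a_n - d) \to M(a_j)$.

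The final step is a numerical one. By part (2), $M$ has finite length with nonzero weights confined to $[0,(N+1)(d-2)]$, so a nonzero graded $k$-linear map $M(s) \to M(t)$ forces $|s - t| \leq (N+1)(d-2)$. Here $s - t = (a_n - d) - a_j = (j - n - 1)d$, and $N < j < n$ gives $n - j + 1 \geq 2$, so $|s - t| = (n - j + 1)\,d \geq 2d$. On the other hand the hypothesis rearranges to $(N+1)(d-2) = N(d-2) + (d-2) < (d+2) + (d-2) = 2d$. Therefore $|s - t| \geq 2d > (N+1)(d-2)$, this component is zero as well, and the map is nullhomotopic.

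None of this is deep once part (4) is available. The two places that need a little care are the reduction in the first paragraph --- translating ``nullhomotopic as a graded $k$-linear map'' into the $\Hom$-vanishing, which is where perfectness of $\wedge^n L_{A/k}$ is used --- and the bookkeeping of which wedge-power summand sits in which cohomological degree and which weight. The only input of the hypothesis is the closing inequality: $N(d-2) < d + 2$ is exactly $(N+1)(d-2) < 2d$, which is precisely what separates the two relevant weight ranges.
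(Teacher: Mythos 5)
Your argument for part (5) is correct and is essentially the paper's own proof: both reduce, via the decomposition in (4) and semisimplicity of graded $k$-vector spaces, to the single component between the degree-$N$ summands, i.e.\ to a graded map $M\big((N+1)(d-1)-nd-d\big) \to M\big((N+1)(d-1)-jd\big)$, and then kill it by observing that the weight shift $(n+1-j)d \geq 2d$ exceeds the weight-spread $(N+1)(d-2)$ of $M$, which is exactly the rearranged hypothesis $N(d-2) < d+2$. The bookkeeping of which summand sits in which cohomological degree and which twist is right, and the reduction from ``nullhomotopic graded chain map'' to a $\Hom$-vanishing in the derived category is unproblematic over a field (perfectness is not even needed there).

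The gap is one of scope: the statement you were asked to prove is the entire lemma, and your write-up takes parts (1)--(4) as given inputs (``By part (2)\dots'', ``part (4) gives\dots'') without proving any of them. Parts (1)--(3) are routine (the Euler relation $f \in J(f)$ using $p \nmid d$, regularity of the sequence of partials, the weight bound from Macaulay's theorem, and the computation $K \simeq M \oplus M(-d)[1]$ from the resolution $S(-d) \xrightarrow{f} S$ of $A$), but part (4) is the substantive computation of the lemma: one must represent $L_{A/k}$ by the two-term complex $(f)/(f^2) \to \Omega^1_{S/k}\otimes_S A$, compute $\wedge^n$ of it via the divided-power Koszul complex $\Gamma^{n-\bullet}(L)\otimes \wedge^{\bullet}(E)$, identify the differential with multiplication by $df$, and match the result with the twisted Koszul complex $K\big((N+1)(d-1)-nd\big)[-N-1]$. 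None of that appears in your proposal, so as a proof of the full lemma it is incomplete; as a proof of (5) conditional on (1)--(4) it is correct and coincides with the paper's argument.
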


\begin{proof}
Let $S = k[x_0,\dots,x_N]$ denote the polynomial ring. We note first the assumption $p \nmid d$ implies (by the Euler relation) that $f$ lies in the ideal $J(f) \subset S$ generated by the sequence $\{\frac{\partial f}{\partial x_i}\}$ of partials. Since $f$ defines a smooth hypersurface, the preceding sequence cuts out a zero-dimensional scheme in $S$, and hence must be a regular sequence by Auslander-Buschbaum.  In particular, each $\frac{\partial f}{\partial x_i}$ is non-zero of degree $d-1$. We now prove the claims:
\begin{enumerate}
\item This is clear.
\item Since $f \in J(f)$, the quotient $M$ is identified with $S/J(f)$, so the claim follows from \cite[Corollary 6.20]{VoisinHodgeII}.
\item Consider the $S$-linear Koszul complex $L := K_S(\{\frac{\partial f}{\partial x_i}\})$ of the sequence of partials. Since the partials span a regular sequence in $S$, we have an equivalence $L \simeq S/J(f) \simeq A/J(f)\simeq M$ of graded $S$-modules. Now the complex $K$ is simply $L \otimes_S A \simeq M \otimes_S A$. Since $M$ is already an $A$-module, we get an identification $K \simeq M \otimes_A (A \otimes_S A)$ as graded $A$-modules, where the right hand side is given the $A$-module structure from the last factor. The resolution $\Big( S(-d) \stackrel{f}{\to} S \Big) \simeq A$ then shows that $K \simeq M \otimes_A\Big(A(-d) \stackrel{0}{\to} A\Big) \simeq M \oplus M(-d)[1]$.
\item Set $L = (f)/(f^2)$, $E = \Omega^1_{S/k} \otimes_S A$, and $c:L \to E$ to be the map defined by differentiation. Then the two-term complex defined by $c$ is identified with $L_{A/k}$. Taking wedge powers for $n > N$ then shows (see \cite[Corollary 1.2.7]{KatoSaito}, for example) that the complex
\begin{equation}
\label{eq:pdkoscomp}\tag{**}
 \Gamma^n(L) \otimes_A \wedge^0(E) \to \Gamma^{n-1}(L) \otimes_A \wedge^1(E) \to \dots \to \Gamma^{n-(N+1)}(L) \otimes_A \wedge^{N+1}(E)
\end{equation}
computes $\wedge^n L_{A/k}[-n]$; here the term on the left is placed in degree $0$. Explicitly, the differential
\[ \Gamma^i(L) \otimes_A \wedge^k(E) \to \Gamma^{i-1}(L) \otimes \wedge^{k+1}(E) \]
is given by 
\[ \gamma_i(f) \otimes \omega \mapsto \gamma_{i-1}(f) \otimes \big(c(f) \wedge \omega\big) = (-1)^k \cdot \gamma_{i-1}(f) \otimes \big(\omega \wedge df\big).\] 
In particular, if we trivialise $\Gamma^i(L)$ using $\gamma_i(f)$, then this differential is identified with left-multiplication by $df$ in the exterior algebra $\wedge^*(E)$. We leave it to the reader to check that the complex \eqref{eq:pdkoscomp} above is isomorphic to $K\big( (N+1)(d-1) - nd \big)[-N - 1]$; the rest follows from (3).

\item Let $M' = M\big((N+1)(d-1)\big)[-N-1]$. Then $M'$ is, up to a shift, a graded $A$-module whose weights lie in an interval size of $(d-2)(N+1)$ by $(2)$. By (4), we have
\[ \wedge^n L_{A/k}[-n] \simeq M'(-nd) \oplus M'(-nd - d)[1]\]
and
\[ \wedge^j L_{A/k}[-j][1] \simeq M'(-jd)[1] \oplus M'(-jd - d)[2].\]
Thus, we must check that all graded $k$-linear maps $M'(-nd-d) \to M'(-jd)$ are nullhomotopic. Twisting, it suffices to show $M'$ and $M'( (n+1-j) d)$ do not share a weight. If they did, then $(n+1-j) d \leq (d-2)(N+1)$. Since $j < n$, this implies $2d \leq (d-2)(N+1)$, i.e., $d+2 \leq N(d-2)$, which contradicts the assumption. \qedhere
\end{enumerate}
\end{proof}

\begin{remark}
\label{rmk:degreeslowdegcone}
The assumption $N(d-2) < d+2$ in Lemma \ref{lem:cchomoghyp} (5) is satisfied in exactly the following cases: $N \geq 5$ with $d = 2$, $N = 3, 4$ with $d \leq 3$, $N = 2$ with $d \leq 5$, and $N = 1$ with any $d \geq 1$. In particular, an ordinary double point of any dimension satisfies the assumptions of Lemma \ref{lem:cchomoghyp} in any odd characteristic. We also remark that in this case (i.e., when $d = 2$), the proof of Lemma \ref{lem:cchomoghyp} (5) shows that the space of graded $k$-linear maps $\wedge^n L_{A/k}[-n] \to \wedge^{j} L_{A/k}[-j][1]$ is simply connected.
\end{remark}

\begin{remark}
Lemma \ref{lem:cchomoghyp} (5) only refers to space of graded $k$-linear maps $\wedge^n L_{A/k}[-n] \to \wedge^{j} L_{A/k}[-j][1]$, and not the space of such graded $A$-linear maps. In particular, it can happen that a graded $A$-linear map $\wedge^n L_{A/k}[-n] \to \wedge^j L_{A/k}[-j][1]$ is nullhomotopic as a graded $k$-linear map, but not as an $A$-linear map. 
\end{remark}

Theorem \ref{thm:ddrsummary} will be used to control on the mod $p$ crystalline cohomology of an lci $k$-scheme. To lift these results to $W$, we will use the following base change isomorphism; see \cite{BhattdeJong} for more details.

\begin{lemma}
\label{lem:cryscohbasechange}
Let $X$ be a finite type lci $k$-scheme. Then the $W$-complex $\R\Gamma_\crys(X/W)$ has finite amplitude, and there is a base change isomorphism
\[ \R\Gamma_\crys(X/W) \otimes_W k \simeq \R\Gamma_\crys(X/k).\]
\end{lemma}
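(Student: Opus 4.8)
The plan is to reduce both assertions to statements about the mod $p$ reduction together with a perfectness (finite-amplitude) argument. First I would address finite amplitude. Since $X$ is a finite type lci $k$-scheme, it is in particular noetherian, quasi-compact and quasi-separated, so it admits a finite affine cover and $\R\Gamma_\crys(X/W)$ can be computed by a finite \v{C}ech complex whose terms are the crystalline cohomologies of affine lci pieces; hence it suffices to bound the amplitude locally (Mayer--Vietoris absorbs the finite cover, shifting the bound by the covering dimension). Using Theorem \ref{thm:ddrsummary}(1) and (3), $\R\Gamma_\crys(X/k)$ is a filtered colimit of the perfect $\calO_{X^{(1)}}$-complexes $\Fil^\conj_i(\dR_{X/k})$; but for $X$ lci of finite type over $k$ each $\wedge^q L_{X^{(1)}/k}$ is perfect of bounded Tor-amplitude, and $L_{X^{(1)}/k}$ has Tor-amplitude in $[-1,0]$, so $\wedge^q L_{X^{(1)}/k}$ lives in cohomological degrees $[-q-\dim X, -q]$ roughly; the key point is that $\wedge^q L_{X^{(1)}/k}$ vanishes for $q$ large once one works stalk-locally at a \emph{smooth} point and is supported in bounded degree at the (finitely many, in each affine) singular points. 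More efficiently: $\R\Gamma_\crys(X/k)$ agrees with derived de Rham cohomology, and over $W$ the analogous derived de Rham complex $\dR_{X'/W}$ for a lift is a module over a noetherian ring of finite Krull dimension with bounded Tor-amplitude inputs, giving a uniform amplitude bound. So finite amplitude of $\R\Gamma_\crys(X/W)$ follows once we know the base change statement, since then $\R\Gamma_\crys(X/W)\otimes_W^{\L} k$ has finite amplitude and $\R\Gamma_\crys(X/W)$ is derived $p$-complete (it is the inverse limit of the $\R\Gamma_\crys(X/W_n)$), whence a derived Nakayama argument transfers the amplitude bound back to $W$.

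For the base change isomorphism, I would invoke the comparison already in the literature: for $X$ any quasi-compact quasi-separated $k$-scheme there is a natural map $\R\Gamma_\crys(X/W)\otimes_W^{\L} k \to \R\Gamma_\crys(X/k)$ coming from the compatibility of the crystalline site with reduction mod $p$ (base change along $W \to W/p = k$), and the content is its being an isomorphism. The standard strategy, following Berthelot--Ogus, is: (i) reduce to $X$ affine by Mayer--Vietoris; (ii) for $X = \Spec A$ affine lci, choose a surjection $P = k[x_1,\dots,x_m] \twoheadrightarrow A$ and a lift $\tilde P = W[x_1,\dots,x_m] \twoheadrightarrow \tilde A$ with $\tilde A$ flat over $W$ and lci; (iii) compute crystalline cohomology relative to $W$ via the divided-power envelope $D$ of the ideal of $A$ in the $p$-adic completion of $\tilde P$ and its de Rham complex $D \otimes \Omega^\bullet_{\tilde P/W}$, and likewise relative to $k$ via the corresponding object over $P/p = P$; (iv) observe that, because $\tilde A$ is $W$-flat and the conormal sequence is a regular sequence up to the lci hypothesis, forming the PD-envelope commutes with $\otimes_W^{\L} k$, so the de Rham--Witt-style complex computing $\R\Gamma_\crys(X/W)$ reduces mod $p$ to the one computing $\R\Gamma_\crys(X/k)$. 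Concretely it is cleanest to route everything through derived de Rham cohomology: $\dR_{X/W} \otimes_W^{\L} k \simeq \dR_{X/k}$ holds by base change for the cotangent complex ($L_{X/W}\otimes_W^{\L} k \simeq L_{X/k}$ since $W \to k$ and the construction is via free resolutions), this isomorphism is compatible with the Hodge/conjugate filtrations, and then Theorem \ref{thm:ddrsummary}(3) identifies $\R\Gamma(X^{(1)}, \dR_{X/k})$ with $\R\Gamma_\crys(X/k)$ and the analogous statement over $W$ (which is exactly the content of \cite{Bhattpadicddr}, as the $W$-version of the comparison) identifies $\R\Gamma(\mathfrak{X}^{(1)}, \dR_{X/W})$ with $\R\Gamma_\crys(X/W)$; one must check the $p$-completeness subtleties since over $W$ one genuinely needs the $p$-completed derived de Rham complex, but for $X$ of finite type lci this is controlled by the boundedness from part (1).

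I expect the main obstacle to be the careful handling of $p$-adic completion: over $k$ everything is honestly bounded and perfect by Theorem \ref{thm:ddrsummary}, but over $W$ the derived de Rham complex is only $p$-complete, not $p$-torsion-free in an obvious bounded way, so one has to verify that (a) $\dR_{X/W}$ has finite amplitude \emph{before} $p$-completion (or that $p$-completion does not disturb amplitude), and (b) the comparison with crystalline cohomology over $W$ — which is what \cite{BhattdeJong} supplies — is compatible with the reduction map. Once the boundedness in (a) is in hand, derived $p$-completeness plus $\otimes^{\L}_W k$ being an isomorphism on associated graded (for the $p$-adic filtration) forces the isomorphism over $W$, and derived Nakayama upgrades the mod $p$ amplitude bound to the claimed finite amplitude over $W$; so in fact the two assertions of the lemma are proved together rather than in sequence.
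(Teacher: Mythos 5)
Your steps (i)--(iv) in the second paragraph are exactly the paper's proof: reduce to $X=\Spec(A)$ affine by Mayer--Vietoris, present $A$ as a quotient of a polynomial $W$-algebra $P$, compute $\R\Gamma_\crys(X/W)$ by $\Omega^*_{P/W}\otimes_P D$ with $D$ the ($p$-adically completed) PD-envelope, and conclude by the $W$-flatness of $D$ (which is where the lci hypothesis enters) together with $D\otimes_W k\simeq D_0$. So the base-change half of your argument is correct and is the intended one. Where you diverge is the finite-amplitude claim, and there you make it much harder than it is: the complex $\Omega^*_{P/W}\otimes_P D$ that you yourself write down in step (iii) has only finitely many terms (degrees $0$ through the number of variables of $P$), so finite amplitude is immediate and needs neither the Tor-amplitude analysis of $\wedge^q L_{X^{(1)}/k}$ nor the derived Nakayama/$p$-completeness detour. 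That detour is also the shakiest part of your write-up: the assertion that $\wedge^q L_{X^{(1)}/k}$ ``vanishes for $q$ large \ldots at the singular points'' is false --- its non-vanishing there for all large $q$ is the engine of the whole paper (Lemma \ref{lem:wedgencc}(3)) --- and the stated degree range $[-q-\dim X,-q]$ is off (Lemma \ref{lem:wedgencc}(1) gives $[-q,-q+N]$, so that $\gr^\conj_q$ sits in $[0,N]$ after the shift, which is the bound you actually need if you insist on this route). Likewise, the paper never needs a derived de Rham comparison over $W$ or any $p$-completion bookkeeping; once you have the explicit finite complex over $W$ and the flatness of $D$, both assertions fall out at once. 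In short: keep your paragraph two, delete paragraphs one and three, and derive the amplitude bound directly from the shape of $\Omega^*_{P/W}\otimes_P D$.
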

\begin{proof}
By a Mayer-Vietoris argument, we immediately reduce to the case where $X = \Spec(A)$ is affine. In this case, let $D$ be the $p$-adic completion of the divided power envelope of a surjection $P \to A$ from a finite type polynomial $W$-algebra $P$. Then $\R\Gamma_\crys(X/W)$ is computed by
\[ \Omega^*_{P/W} \otimes_P D. \]
Since this complex has finite amplitude, the first claim is proven. Next, if $P_0 = P/p$, and $D_0$ is the divided power envelope of $P_0 \to A$, then $\R\Gamma_\crys(X/k)$ is computed by
\[ \Omega^*_{P_0/k} \otimes_{P_0} D_0.\]
The claim now follows from the well-known fact that $D$ is $W$-flat (since $A$ is lci), and $D \otimes_W k \simeq D_0$.
\end{proof}

\section{Examples}
\label{sec:ex}

We come to the main topic of this note: examples of singular proper lci $k$-varieties with large crystalline cohomology.  In \S \ref{ss:frobliftex}, using lifts of Frobenius, we show that certain singular proper varieties (such as nodal curves, or singular lci toric varieties) have infinitely generated crystalline cohomology. In \S \ref{ss:odpex}, we show that a single ordinary double point (or worse) on an lci proper variety forces crystalline cohomology to be infinitely generated.

\subsection{Frobenius-liftable examples}
\label{ss:frobliftex}

We start with a general proposition which informally says: a proper lci $k$-variety has large crystalline cohomology if it contains an isolated singular point whose \'etale local ring lifts to $W_2$ compatibly with Frobenius. Note that lci $k$-algebras always lift to $W_2$, so this is really a condition on Frobenius.

\begin{proposition}
\label{prop:metaex}
Let $X$ be proper lci $k$-scheme. Assume:
\begin{enumerate}
\item There is a closed point $x \in X$ that is an isolated singular point (but there could be other singularities on $X$).
\item There is a lift to $W_2$ of the Frobenius endomorphism of the henselian ring $\calO_{X,x}^h$.
\end{enumerate}
Set $N = \dim_k(\fram_{x}/\fram_x^2)$. Then there exists an integer $0 < i \leq N$ such that:
\begin{enumerate}
\item $H^N_\crys(X/k)$ is infinitely generated over $k$.
\item $H^{N-i}_\crys(X/k)$ is infinitely generated over $k$.
\item At least one of $H^{N+1}_\crys(X/W)[p]$ and $H^N_\crys(X/W)/p$ is infinitely generated over $k$.
\item At least one of $H^{N+1-i}_\crys(X/W)[p]$ and $H^{N-i}_\crys(X/W)/p$ is infinitely generated over $k$.
\end{enumerate}
If $\dim(\calO_{X,x}) > 0$, then the integer $i$ above can be chosen to be strictly less than $N$.
\end{proposition}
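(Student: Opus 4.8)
The plan is to transport the pointwise information about wedge powers of the cotangent complex (Lemma \ref{lem:wedgencc}) into the global crystalline cohomology, using Theorem \ref{thm:ddrsummary} together with faithfully flat descent along $\calO_{X,x}\to\calO_{X,x}^h$ to mediate between the local and global pictures. Set $A:=\calO_{X,x}$ and $A^h:=\calO_{X,x}^h$; by hypothesis (1), $A$ is essentially of finite type over $k$ with an isolated lci singularity of embedding dimension $N$ at its maximal ideal $\fram$, so Lemma \ref{lem:wedgencc} applies. By Theorem \ref{thm:ddrsummary}(3), $\R\Gamma_\crys(X/k)\simeq\R\Gamma(X^{(1)},\dR_{X/k})$, and by Remark \ref{rmk:ddrcolimits} this is $\colim_n\R\Gamma(X^{(1)},\Fil^\conj_n\dR_{X/k})$ with graded pieces $\R\Gamma(X^{(1)},\wedge^n L_{X^{(1)}/k}[-n])$. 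Two facts will be used repeatedly: (i) since $A\to A^h$ is ind-\'etale, Theorem \ref{thm:ddrsummary}(2) gives $\dR_{A^h/k}\simeq\dR_{A/k}\otimes_A A^h$ compatibly with conjugate filtrations; (ii) since $A^h$ lifts to $W_2$ compatibly with Frobenius by hypothesis (2), Theorem \ref{thm:ddrsummary}(4) splits the conjugate filtration of $\dR_{A^h/k}$, so $\Fil^\conj_m\dR_{A^h/k}\simeq\bigoplus_{j\le m}\wedge^j L_{A^h/k}[-j]$ for all $m$.

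Building classes at $x$. Fix $n>N$. By Lemma \ref{lem:wedgencc}(1)--(2), the stalk $(\gr^\conj_n\dR_{X/k})_x=\wedge^n L_{A/k}[-n]$ is a perfect, finite length (hence $\fram$-power torsion) complex, so $P_n:=\R\underline\Gamma_{\{x\}}(\gr^\conj_n\dR_{X/k})$ is a complex on $X^{(1)}$ supported at $x$ with stalk $\wedge^n L_{A/k}[-n]$, and the canonical map $\iota_n:P_n\to\gr^\conj_n\dR_{X/k}$ is an isomorphism upon localising at $x$. I claim $\iota_n$ lifts along $\Fil^\conj_n\dR_{X/k}\twoheadrightarrow\gr^\conj_n\dR_{X/k}$. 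The obstruction lies in $\Hom_{D(X^{(1)})}(P_n,\Fil^\conj_{n-1}\dR_{X/k}[1])$; since $P_n$ is supported at the closed point $x$, this group is computed in $D(\calO_{X,x})$, where — using that $\iota_n$ localises to the identity — the obstruction becomes the boundary map of the local conjugate triangle, an element of $\Hom_{D(A)}(\gr^\conj_n\dR_{A/k},\Fil^\conj_{n-1}\dR_{A/k}[1])$. Its image in $\Hom_{D(A^h)}(\cdots)$ is the corresponding boundary map for $\dR_{A^h/k}$, which vanishes by (ii); since $A\to A^h$ is faithfully flat and $\gr^\conj_n\dR_{A/k}$ is perfect, $\Hom_{D(A)}(\cdots)\hookrightarrow\Hom_{D(A^h)}(\cdots)$, so the obstruction vanishes and a lift $\sigma_n:P_n\to\Fil^\conj_n\dR_{X/k}$ exists. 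Applying $\R\Gamma(X^{(1)},-)$ (note $\R\Gamma(X^{(1)},P_n)=\wedge^n L_{A/k}[-n]$ as $P_n$ is supported at a point) and composing with the structure map to $H^\bullet_\crys(X/k)$, the nonzero class of Lemma \ref{lem:wedgencc}(3) in $H^{-n+N}(\wedge^n L_{A/k})=H^N(\wedge^n L_{A/k}[-n])$ yields $w_n\in H^N_\crys(X/k)$; similarly, fixing by the pigeonhole principle on Lemma \ref{lem:wedgencc}(4) an integer $0<i\le N$ with $H^{-n+N-i}(\wedge^n L_{A/k})\ne 0$ for all $n$ in some infinite set $S$, we obtain $w'_n\in H^{N-i}_\crys(X/k)$ for $n\in S$.

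Detecting the classes. Let $\mathrm{loc}_x:\R\Gamma(X^{(1)},\dR_{X/k})\to\dR_{A/k}$ be restriction along $\Spec A^{(1)}\hookrightarrow X^{(1)}$; it respects conjugate filtrations, and on $P_n$ it is the identity. The key point is that the conjugate-filtration transition maps $H^j(\Fil^\conj_m\dR_{A/k})\to H^j(\Fil^\conj_{m+1}\dR_{A/k})$ are injective for all $j,m$: after the faithfully flat base change $-\otimes_A A^h$ they become, via (i)--(ii), the evident inclusions $\bigoplus_{j'\le m}H^{j-j'}(\wedge^{j'}L_{A^h/k})\hookrightarrow\bigoplus_{j'\le m+1}(\cdots)$. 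By construction $\mathrm{loc}_x(w_n)$ is the image in $H^N(\dR_{A/k})$ of $(\sigma_n)_{x,*}(v_n)$, where $(\sigma_n)_x$ is a section of $\Fil^\conj_n\dR_{A/k}\twoheadrightarrow\gr^\conj_n\dR_{A/k}$ and $0\ne v_n\in H^{-n+N}(\wedge^n L_{A/k})$; hence $(\sigma_n)_{x,*}(v_n)$ has nonzero image in $H^N(\gr^\conj_n\dR_{A/k})$ and so is not in the image of $H^N(\Fil^\conj_{n-1}\dR_{A/k})$. Combined with the injectivity just noted, $\mathrm{loc}_x(w_n)$ lies in the $n$-th but not the $(n-1)$-st step of the conjugate image filtration on $H^N(\dR_{A/k})$; it follows that the $\mathrm{loc}_x(w_n)$ — hence the $w_n$ — are $k$-linearly independent, proving (1). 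The identical argument in cohomological degree $N-i$ over $n\in S$ proves (2), and the last clause ($i<N$ when $\dim A>0$) is Lemma \ref{lem:wedgencc}(4)--(5). Finally, Lemma \ref{lem:cryscohbasechange} supplies the base-change isomorphism $\R\Gamma_\crys(X/W)\otimes_W k\simeq\R\Gamma_\crys(X/k)$ for the DVR $W$, hence short exact sequences $0\to H^j_\crys(X/W)/p\to H^j_\crys(X/k)\to H^{j+1}_\crys(X/W)[p]\to 0$; taking $j=N$ and $j=N-i$, since the middle terms are infinitely generated over $k$ by (1)--(2), at least one outer term is in each case, which is (3) and (4).

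The main obstacle — and the point where the hypotheses really enter — is the passage from the clean local picture at $x$ to the a priori much smaller global cohomology: classes produced near $x$ could be destroyed either by boundary maps of the conjugate filtration or by the transition maps of the colimit computing $H^\bullet_\crys(X/k)$. The remedy is to invoke the local splitting (ii) twice — once to kill the obstruction to globalising the $x$-supported piece $P_n$, and once to force the conjugate transition maps on $H^\bullet(\dR_{A/k})$ to be injective — each time descending the statement from $A^h$ back to $A$ by faithful flatness. It remains only to verify routine compatibilities: that $\dR$ and its conjugate filtration base-change along the ind-\'etale map $A\to A^h$; that $\Hom$-groups and cohomology out of perfect complexes supported at the closed point $x$ commute with this base change and are computed on $\Spec A$; and that $\R\Gamma(X^{(1)},-)$ of a complex supported at a closed point returns its stalk.
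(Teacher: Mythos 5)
Your argument is correct, and it reaches the conclusion by a genuinely different mechanism than the paper, even though it draws on the same inputs (Theorem \ref{thm:ddrsummary}, Lemma \ref{lem:wedgencc}, Lemma \ref{lem:cryscohbasechange}, and the pigeonhole step). The paper passes to the quotient $\calQ := \dR_{X/k}/\Fil^\conj_N$, observes that $\calQ$ vanishes on a punctured neighbourhood of $x$, and uses a Mayer--Vietoris argument to split $\calQ_x$ off of $\R\Gamma(X^{(1)},\calQ)$ as a direct summand; infiniteness of $H^N$ then follows because $\R\Gamma(X^{(1)},\Fil^\conj_N)$ is finite dimensional by perfectness and properness, and $\calQ_x$ is computed by the Frobenius splitting over $\calO_{X,x}^h$. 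You instead build one class per conjugate-graded piece, lifting the $x$-supported summand $P_n$ of $\gr^\conj_n$ into $\Fil^\conj_n$ by killing the obstruction (descended from the splitting over $A^h$ by faithful flatness of $A \to A^h$, using perfectness of $\wedge^n L_{A/k}$ to commute $\R\Hom$ with the base change), and then detect linear independence by localising at $x$ and showing the conjugate transition maps on $H^*(\dR_{A/k})$ are injective (again by faithfully flat descent from the split situation over $A^h$). Both routes are sound; the paper's is shorter and splits off the entire local contribution in one stroke, whereas yours costs two extra descent arguments but produces explicit independent classes and, notably, never uses properness of $X$ (only that $X$ is lci, quasi-compact and quasi-separated), so it proves the statement for arbitrary finite type lci $X$ with such a point. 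The only places requiring care, which you correctly flag as routine, are that $P_n$ is perfect on $X^{(1)}$ and supported at $x$ (so that $\Hom_{D(X^{(1)})}(P_n,-)$ is computed on the stalk), and that the conjugate filtration base-changes along the ind-\'etale map $A \to A^h$; both hold for the reasons you indicate.
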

\begin{proof}
The desired integer $i$  will be found in the proof of (2) below.
\begin{enumerate}
\item Consider the exact triangle
\[ \Fil^\conj_N(\dR_{X/k}) \to \dR_{X/k} \to \calQ \]
in the $\calO_{X^{(1)}}$-complexes, where $\calQ$ is defined as the homotopy-cokernel. Theorem \ref{thm:ddrsummary} (1) and the lci assumption on $X$ show that $\Fil^\conj_N(\dR_{X/k})$ is a perfect complex on $X^{(1)}$, so $H^i(X^{(1)},\Fil^\conj_N(\dR_{X/k}))$ is a finite dimensional vector space for all $i$ by properness. By Theorem \ref{thm:ddrsummary} (3), to show that $H^N_\crys(X/k)$ is infinitely generated, it suffices to show that $H^N(X^{(1)},\calQ)$ is an infinite dimensional $k$ vector space. First, we show:

\begin{claim}
The natural map $\R\Gamma(X,\calQ) \to \calQ_x$ is a projection onto a summand as $k$-complexes.
\end{claim}
\begin{proof}[Proof of Claim]
Let $j:U \to X$ be an affine open neighbourhood of $x$ such that $U$ has an isolated singularity at $x$, and let $j':V = X - \{x\} \subset X$. By Theorem \ref{thm:ddrsummary} (2), $\calQ|_{U \cap V} \simeq 0$ since $U \cap V$ is $k$-smooth. Hence, the Mayer-Vietoris sequence for the cover $\{U,V\}$ of $X$ and the complex $\calQ$ degenerates to show
\[ \R\Gamma(X,\calQ) \simeq \R\Gamma(U,\calQ) \oplus \R\Gamma(V,\calQ).\]
It now suffices to show that $\R\Gamma(U,\calQ) \simeq \calQ_x$. By Theorem \ref{thm:ddrsummary} (1), $\calQ|_U$ admits an increasing bounded below separated exhaustive filtration with graded pieces $\wedge^n L_{U/k}[-k]$ for $n > N$. Since cohomology commutes with filtered colimits (as $U$ is affine), $\R\Gamma(U,\calQ)$ also inherits such a filtration with graded pieces computed by $\R\Gamma(U,\wedge^n L_{U^{(1)}/k}[-k])$ for $n > N$. Applying the same analysis to $\calQ_x$ reduces us to checking that $\R\Gamma(U,\wedge^n L_{U^{(1)}/k}[-n]) \simeq \wedge^n L_{\calO_{X,x}^{(1)}/k}[-n]$ for $n > N$. But this is clear: for $n > N$,  $\wedge^n L_{U^{(1)}/k}[-n]$ is a perfect complex on $U^{(1)}$ that is supported only at $x$ and has stalk $\wedge^n L_{\calO_{X,x}^{(1)}/k}[-n]$.
\end{proof}

To compute the stalk $\calQ_x$, define $\calQ'$ via the exact triangle
\[ \Fil^\conj_N(\dR_{\calO_{X,x}^h/k}) \to \dR_{\calO_{X,x}^h/k} \to \calQ'.\]
Then $\calQ_x = \calQ'$ by Theorem \ref{thm:ddrsummary} (2), the finite length property of $\calQ_x$, and the fact that $\calO_{X,x}^h \otimes_{\calO_{X,x}} M \simeq M$ for any finite length $\calO_{X,x}$-module $M$. Moreover,  $\calQ'$ can be computed using the Frobenius lifting assumption and Theorem \ref{thm:ddrsummary} (4):
\[ \calQ_x \simeq \calQ' \simeq  \oplus_{n = N+1}^\infty \wedge^n L_{\calO_{X,x}^{(1),h}/k}[-n].\]
Thus, to prove that $H^N(X^{(1)},\calQ)$ is infinitely generated, it suffices to show that $H^N(\calQ_x)$ is infinitely generated. This follows from the formula above  and Lemma \ref{lem:wedgencc} (3).
\item By combining the proof of (1) with Lemma \ref{lem:wedgencc} (4) and the pigeonhole principle, one immediately finds an integer $0 < i \leq N$ such that $H^{N-i}_\crys(X/k)$ is infinitely generated over $k$. Lemma \ref{lem:wedgencc} (5) shows that we can choose such an $i$ with $i < N$ if $\dim(\calO_{X,x}) > 0$.
\item The base change isomorphism from Lemma \ref{lem:cryscohbasechange} gives a short exact sequence
\[ 0 \to H^N_\crys(X/W)/p \to H^N_\crys(X/k) \to H^{N+1}_\crys(X/W)[p] \to 0,\]
so the claim follows from (1).
\item The same argument as (3) works using (2) instead of (1). \qedhere
\end{enumerate}
\end{proof}

We need the following elementary result on Frobenius liftings:

\begin{lemma}
\label{lem:liftfrob}
Let $A$ be a $k$-algebra that admits a lift to $W_2$ together with a compatible lift of Frobenius. Then the same is true for any ind-\'etale $A$-algebra $B$ (such as the henselisation $A$ at a point).
\end{lemma}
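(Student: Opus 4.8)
\textbf{Proof plan for Lemma \ref{lem:liftfrob}.}
The plan is to reduce the general ind-\'etale case to the single-\'etale case, and then use the rigidity/unobstructedness of \'etale morphisms to lift both the scheme and the Frobenius. First I would write $\calX_2$ for the given flat $W_2$-lift of $A$ equipped with a lift $F_2 : \calX_2 \to \calX_2$ of the absolute Frobenius of $\Spec(A)$ lying over the canonical Frobenius on $W_2$. Since an ind-\'etale algebra is a filtered colimit of \'etale algebras, and lifts along $W_2 \to k$ together with lifts of Frobenius are compatible with filtered colimits (the lift of the Frobenius on the colimit is the colimit of the lifts, because $B \otimes_{B,\Frob} W_2 \simeq \colim (B_\alpha \otimes_{B_\alpha,\Frob} W_2)$ and the transition maps are all flat), it suffices to treat the case where $B$ is a (finitely presented) \'etale $A$-algebra. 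This is the only genuinely non-formal reduction, and it is routine.

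Next, for $B$ \'etale over $A$, I would invoke the standard deformation theory of \'etale morphisms: since $L_{B/A} \simeq 0$, there is a unique (up to unique isomorphism) flat \'etale $W_2$-algebra $\calB_2$ lifting $B$ together with a map $\calX_2 \to \Spec(\calB_2)$... more precisely, $\calB_2 := \calA_2 \otimes_A B$ using the (unique) \'etale lift, where $\calA_2$ is the chosen lift of $A$; flatness over $W_2$ follows since $B$ is flat (indeed \'etale) over $A$. Concretely one can use that $\Spec(\calA_2) \to \Spec(\calA_2)$ and the \'etale site are insensitive to nilpotent thickenings, so $\calB_2$ is simply the unique \'etale $\calA_2$-algebra with $\calB_2/p = \calB_2 \otimes_{W_2} k = B$.

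Finally I would produce the Frobenius lift on $\calB_2$. The Frobenius $F_2 : \calA_2 \to \calA_2$ and the structure map $\calA_2 \to \calB_2$ determine a $W_2$-algebra map $\calA_2 \xrightarrow{F_2} \calA_2 \to \calB_2$; reducing mod $p$ this is $A \xrightarrow{\Frob} A \to B$, which factors (uniquely) through the absolute Frobenius $\Frob_B : B \to B$ because $\Frob_B$ is a map of $A$-algebras via the given structure map composed with $\Frob_A$. By the \'etale lifting property of $\calB_2$ over $\calA_2$ (rigidity: a map out of an \'etale $\calA_2$-algebra into a $W_2$-algebra is determined by its reduction mod $p$, given a compatible map from $\calA_2$), the composite $\calA_2 \to \calB_2 \xrightarrow{?} \calB_2$ lifting $\Frob_B$ exists and is unique, and it automatically lies over $F_2$ on $\calA_2$ and over the canonical Frobenius on $W_2$. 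This gives the desired compatible Frobenius lift on $\calB_2$, completing the proof.

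The main obstacle is essentially bookkeeping rather than mathematics: one must be careful that the ``compatible lift of Frobenius'' in the sense of the Notation section (a lift of the \emph{absolute} Frobenius lying over the canonical Frobenius of $W_2$) is exactly what the \'etale rigidity statement produces, and that the colimit in the first step genuinely commutes with the formation of the Frobenius twist and with flatness. Both points are standard, so I expect the write-up to be short.
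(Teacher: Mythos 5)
Your proposal is correct and is essentially the paper's argument: the paper's entire proof is ``this follows by deformation theory since $L_{B/A}=0$,'' and your write-up simply unwinds that one line via \'etale rigidity (unique flat lifts of \'etale algebras plus the infinitesimal lifting property to extend the Frobenius). The only cosmetic difference is that you first reduce to the finitely presented \'etale case by a filtered colimit, whereas the vanishing $L_{B/A}=0$ already holds for ind-\'etale $B$ and lets one run the deformation-theoretic argument directly.
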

\begin{proof}
This follows by deformation theory since $L_{B/A} = 0$ for $B$ as above.
\end{proof}

Specialising Proposition \ref{prop:metaex} leads to the promised examples.

\begin{example}
\label{ex:infinitesimalcurves}
Let $X = \Spec(k[x]/x^n)$ for some $n > 1$. Then $H^1_\crys(X/k)$, $H^0_\crys(X/k)$, $H^1_\crys(X/W)/p$, and $H^1_\crys(X/W)[p]$ are all infinitely generated. To see this, note first that Proposition \ref{prop:metaex} applies directly since $X$ is a proper lci $k$-scheme with a lift of Frobenius to $W_2$. Moreover, since $X$ can be realised as a subscheme of $\A^1$, the only non-zero cohomology groups are $H^1_\crys$ and $H^0_\crys$ (over $W$, as well as over $k$). The rest follows directly from Proposition \ref{prop:metaex} and the observation that $H^0_\crys(X/W) = W$ (which is true for any lci $k$-scheme).
\end{example}

\begin{example}
\label{ex:nodalcurve}
Let $X$ be a proper nodal $k$-curve with at least one node. Then $H^1_\crys(X/k)$ and $H^2_\crys(X/k)$ are infinitely generated. Moreover, $H^2_\crys(X/W)/p$, and at least one of $H^1_\crys(X/W)/p$ and $H^2_\crys(X/W)[p]$, are infinitely generated. Most of these claims follow directly from Proposition \ref{prop:metaex}: a nodal curve is always lci, and the henselian local ring at a node on $X$ is isomorphic to the henselisation of $k[x,y]/(xy)$ at the origin, which is a one-dimensional local ring that admits a lift to $W_2$ compatible with Frobenius by Lemma \ref{lem:liftfrob}. It remains to show that $H^3_\crys(X/W)[p]$ is finitely generated. As pointed out by de Jong, the stronger statement $H^3_\crys(X/W) = 0$ is true. If $u:(X/W)_\crys \to X_\zar$ is the natural map (i.e., $u_*(\calF)(U \subset X) = \Gamma( (U/W)_\crys,\calF|_U)$), then $\R^i u_* \calO_{X/W,\crys}$ is non-zero only for $0 \leq i \leq 2$, and $\R^2 u_* \calO_{X/W,\crys}$ is supported only at the nodes\footnote{Proof sketch: Replace the Zariski topology with the Nisnevich topology in the foundations of crystalline cohomology, and then use that every nodal curve is Nisnevich locally planar. This observation yields a three-term de Rham complex computing the stalks of $\R^i u_* \calO_{X/W,\crys}$.}. The rest follows from the Leray spectral sequence as $X_\zar$ has cohomological dimension $1$.
\end{example}

\begin{example}
\label{ex:toricsing}
Let $X$ be a proper lci $k$-scheme. Assume that $x \in X(k)$ is an isolated  singular point (but there could be other singularities on $X$) such that $\calO_{X,x}^h$ is toric of embedding dimension $N$. Then $H^N_\crys(X/k)$ is infinitely generated, and at least one of $H^N_\crys(X/W)$ and $H^{N+1}_\crys(X/W)[p]$ is infinitely generated over $W$. This follows from Proposition \ref{prop:metaex} and Lemma \ref{lem:liftfrob} since toric rings lift to $W_2$ compatibly with Frobenius (use multiplication by $p$ on the defining monoid). Some specific examples are: any proper toric variety with isolated lci singularities, or any proper singular $k$-scheme of dimension $\leq 3$ with at worst ordinary double points.
\end{example}

\begin{example}
Let $(E,e)$ be an ordinary elliptic curve over $k$, and let $X$ be a proper lci $k$-surface with a singularity at $x \in X(k)$ isomorphic to the one on the affine cone over $E \subset \P^2_k$ embedded via $\calO(3[e])$; for example, we could take $X$ to be the projective cone on $E \subset \P^2_k$. Then $H^3_\crys(X/k)$ and one of $H^2_\crys(X/k)$ or $H^1_\crys(X/k)$ are infinitely generated over $k$. This can be proven using Proposition \ref{prop:metaex} and the theory of Serre-Tate canonical lifts. Since we prove a more general and shaper result in Example \ref{ex:curvecone}, we leave details of this argument to the reader.
\end{example}

\subsection{Conical examples}
\label{ss:odpex}

Our goal here is to show that the presence of an ordinary double point forces crystalline cohomology to be infinitely generated. In fact, more generally, we show the same for any proper lci variety that has a singularity isomorphic to the cone on a low degree smooth hypersurface.   We start with an {\em ad hoc} definition.

\begin{definition}
A local $k$-algebra $A$ is called a {\em low degree cone} if its henselisation is isomorphic to the henselisation at the origin of the ring $k[x_0,\dots,x_N]/(f)$, where $f$ is a homogeneous degree $d$ polynomial defining a smooth hypersurface in $\P^N$ such that $N(d-2) < d+2$. The integer $d$ is called the {\em degree} of this cone; if $d = 2$, we also call $A$ an {\em ordinary double point}. A closed point $x \in X$ on a finite type $k$-scheme $X$ is called {\em low degree conical singularity} (respectively, an {\em ordinary double point}) if $\calO_{X,x}^h$ is a low degree cone (respectively, an ordinary double point).
\end{definition}

We start by showing that the conjugate spectral sequence must eventually degenerate for low degree cones:

\begin{proposition}
\label{prop:odpconjdiff}
Let $A$ be low degree cone of degree $d$. Assume $p \nmid d$. Then for $n > \dim(A)$, the extensions 
\[ \gr_n^\conj(\dR_{A/k}) \to \Fil_{n-1}^\conj(\dR_{A/k})/\Fil_{\dim(A)}^\conj(\dR_{A/k})[1] \] 
occurring in the conjugate filtration are nullhomotopic when viewed as $k$-linear extensions. In particular, there exist $k$-linear isomorphisms
\[ \Fil_{n}^\conj(\dR_{A/k})/\Fil_{\dim(A)}^\conj(\dR_{A/k}) \simeq \oplus_{j = \dim(A) + 1}^{n} \gr^\conj_j(\dR_{A/k}).\]
splitting the conjugate filtration for any $n > \dim(A)$.
\end{proposition}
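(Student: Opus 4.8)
The plan is to run a short induction on the conjugate level, built on Lemma~\ref{lem:cchomoghyp}(5). The point to keep in mind throughout is that the computation must take place in the derived category of \emph{graded $k$-modules}: the derived de Rham complex of a graded $k$-algebra is naturally a complex of graded modules with the conjugate filtration by graded subcomplexes, and the $A^{(1)}$-linear connecting maps of that filtration need not vanish --- it is exactly Lemma~\ref{lem:cchomoghyp}(5) that makes them nullhomotopic once one forgets the $\calO_{X^{(1)}}$-module structure and works over $k$.

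First I would reduce to the graded model. Let $R = k[x_0,\dots,x_N]/(f)$ be the graded ring of which $A$ is, after henselisation, the localisation at the origin; then $\dim(A) = N$. For each $j > N$ the graded piece $\gr^\conj_j(\dR_{A/k}) \simeq \wedge^j L_{A^{(1)}/k}[-j]$ (Theorem~\ref{thm:ddrsummary}(1)) has finite length cohomology by Lemma~\ref{lem:wedgencc}(2), and hence so does each quotient $\Fil^\conj_n(\dR_{A/k})/\Fil^\conj_N(\dR_{A/k})$, being assembled from finitely many such pieces. Since $\dR_{A/k}$ and its conjugate filtration commute with \'etale --- in particular henselian --- localisation by Theorem~\ref{thm:ddrsummary}(2), and a finite length module is unchanged by henselisation of the base local ring, all the complexes and triangles that appear below for $A$ are identified, as complexes of $k$-modules compatibly with the relevant exact triangles, with the corresponding ones for the localisation of $R$ at the origin, which carries an internal grading. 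We may therefore assume $A$ is this graded local ring, so that Lemma~\ref{lem:cchomoghyp} is available; and since $k$ is perfect, $A^{(1)}$ is again the localisation of the coordinate ring of a smooth degree $d$ hypersurface in $\P^N$, so Lemma~\ref{lem:cchomoghyp}(5) holds verbatim with $A^{(1)}$ in place of $A$.

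Now abbreviate $G_j := \gr^\conj_j(\dR_{A/k})$ for $j > N$ and $F_m := \Fil^\conj_m(\dR_{A/k})/\Fil^\conj_N(\dR_{A/k})$ for $m \geq N$, viewed in the derived category of graded $k$-modules. Then $F_{N+1} \simeq G_{N+1}$, and for each $m > N$, quotienting the standard conjugate-filtration triangle $\Fil^\conj_{m-1}(\dR_{A/k}) \to \Fil^\conj_m(\dR_{A/k}) \to \gr^\conj_m(\dR_{A/k}) \to \Fil^\conj_{m-1}(\dR_{A/k})[1]$ by its subobject $\Fil^\conj_N(\dR_{A/k}) \subseteq \Fil^\conj_{m-1}(\dR_{A/k})$ (octahedral axiom) yields an exact triangle
\[ F_{m-1} \to F_m \to G_m \stackrel{\delta_m}{\to} F_{m-1}[1] \]
in which $\delta_m$ is precisely the $k$-linear extension class occurring in the statement (take $m = n$). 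I would then show, by induction on $m > N$, that $\delta_m$ is nullhomotopic and $F_m \simeq \bigoplus_{j = N+1}^m G_j$ as graded $k$-complexes. The base case $m = N+1$ is immediate, since $F_{N+1} \simeq G_{N+1}$ and $\delta_{N+1}$ lands in $F_N[1] = 0$. For the inductive step, assume $F_{m-1} \simeq \bigoplus_{j = N+1}^{m-1} G_j$. A map out of $G_m$ into a finite direct sum is the direct sum of its components, so
\[ \Hom(G_m, F_{m-1}[1]) \simeq \bigoplus_{j = N+1}^{m-1} \Hom\big(\wedge^m L_{A^{(1)}/k}[-m],\, \wedge^j L_{A^{(1)}/k}[-j][1]\big), \]
and each summand vanishes by Lemma~\ref{lem:cchomoghyp}(5), since $N < j < m$. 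Hence $\delta_m = 0$, the triangle splits, and $F_m \simeq F_{m-1} \oplus G_m \simeq \bigoplus_{j = N+1}^m G_j$. This is exactly both assertions of the proposition (with $\dim(A) = N$).

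The argument is essentially formal once Lemma~\ref{lem:cchomoghyp}(5) is in hand. The two points that need care are (i) insisting on the graded $k$-linear derived category rather than the $A^{(1)}$-linear one --- the $A^{(1)}$-linear connecting maps genuinely can be nonzero, so one cannot shortcut via the perfectness of $\Fil^\conj_\bullet$ --- and (ii) the reduction to the graded model, where finite length cohomology is what lets one transport Lemma~\ref{lem:cchomoghyp} from $R$ over to an arbitrary low degree cone. I would expect (ii) to be the more tedious of the two, but neither is a genuine obstacle.
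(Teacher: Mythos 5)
Your proposal is correct and follows essentially the same route as the paper: reduce to the graded localisation of $k[x_0,\dots,x_N]/(f)$ via \'etale/henselian invariance, then induct on the conjugate level, using the splitting of the previous stage to decompose $\Hom(\gr^\conj_n, \Fil^\conj_{n-1}/\Fil^\conj_{\dim(A)}[1])$ into summands $\Hom(\wedge^n L_{A^{(1)}/k}[-n], \wedge^j L_{A^{(1)}/k}[-j][1])$, each of which vanishes by Lemma \ref{lem:cchomoghyp}(5). Your explicit attention to working $k$-linearly (not $A^{(1)}$-linearly) and to the Frobenius twist matches the paper's intent, just spelled out in more detail.
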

\begin{proof}
By replacing $A$ with its henselisation and then using the \'etale invariance of cotangent complexes and Theorem \ref{thm:ddrsummary} (2), we may assume $A$ is the localisation of $k[x_0,\dots,x_N]/(f)$ at the origin for some homogeneous degree $d$ polynomial $f$ defining a smooth hypersurface in $\P^N$. In particular, $A$ is graded. Also, by functoriality, the conjugate filtration is compatible with the grading. Recall that the extensions in question arise from the triangles
\[ \Fil_{n-1}^\conj(\dR_{A/k})/\Fil_{\dim(A)}^\conj(\dR_{A/k}) \to \Fil_n^\conj(\dR_{A/k})/\Fil_{\dim(A)}^\conj(\dR_{A/k}) \to \gr_n^\conj(\dR_{A/k}).\]
These triangles (and thus the corresponding extensions) are viewed as living in the derived category of graded $k$-vector spaces. By induction, we have to show the following: assuming a graded splitting
\[ s_{n-1}:\Fil_{n-1}^\conj(\dR_{A/k})/\Fil_{\dim(A)}^\conj(\dR_{A/k}) \simeq \oplus_{j = \dim(A) + 1}^{n-1} \gr^\conj_j(\dR_{A/k})\]
of the conjugate filtration, there exists a graded splitting
\[ s_n:\Fil_{n}^\conj(\dR_{A/k})/\Fil_{\dim(A)}^\conj(\dR_{A/k}) \simeq \oplus_{j = \dim(A) + 1}^{n} \gr^\conj_j(\dR_{A/k}),\]
of the conjugate filtration compatible with $s_{n-1}$. Chasing extensions, it suffices to show: for $\dim(A) < j < n$, all graded maps
\[ \gr^\conj_n(\dR_{A/k}) \to \gr^\conj_j(\dR_{A/k})[1]\]
are nullhomotopic. This comes from Lemma \ref{lem:cchomoghyp} (5) and Theorem \ref{thm:ddrsummary} (1).
\end{proof}

\begin{remark}
An inspection of the proof of Proposition \ref{prop:odpconjdiff} coupled with Remark \ref{rmk:degreeslowdegcone} shows that if $A$ is an ordinary double point, then the isomorphism
\[ \Fil_{n}^\conj(\dR_{A/k})/\Fil_{\dim(A)}^\conj(\dR_{A/k}) \simeq \oplus_{j = \dim(A) + 1}^{n} \gr^\conj_j(\dR_{A/k})\]
is unique, up to non-unique homotopy. We do not know any applications of this uniqueness.
\end{remark}

Using Proposition \ref{prop:odpconjdiff}, we can prove infiniteness of crystalline cohomology for some cones:

\begin{corollary}
\label{cor:lowdegcone}
Let $X$ be a proper lci $k$-scheme. Assume that there is low degree conical singularity at a closed point $x \in X$ with degree $d$ and  embedding dimension $N$.  If $p \nmid d$, then $H^N_\crys(X/k)$ and $H^{N-1}_\crys(X/k)$ are infinitely generated $k$-vector spaces. \end{corollary}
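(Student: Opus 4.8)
The plan is to mimic the proof of Proposition \ref{prop:metaex} part (1), replacing the use of the Frobenius-liftability hypothesis (Theorem \ref{thm:ddrsummary} (4)) by the splitting of the conjugate filtration coming from Proposition \ref{prop:odpconjdiff}. Concretely, set $N$ equal to the embedding dimension of the singularity at $x$, and consider the exact triangle $\Fil^\conj_N(\dR_{X/k}) \to \dR_{X/k} \to \calQ$ of $\calO_{X^{(1)}}$-complexes. As in Proposition \ref{prop:metaex}, the lci hypothesis and Theorem \ref{thm:ddrsummary} (1) make $\Fil^\conj_N(\dR_{X/k})$ perfect, hence $H^i(X^{(1)}, \Fil^\conj_N(\dR_{X/k}))$ is finite dimensional for all $i$ by properness; so by Theorem \ref{thm:ddrsummary} (3) it suffices to show $H^N(X^{(1)}, \calQ)$ is infinite dimensional.

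Next I would run the same Mayer--Vietoris/localisation argument as in the Claim inside the proof of Proposition \ref{prop:metaex}: choosing an affine open $U \ni x$ with an isolated singularity at $x$ and setting $V = X - \{x\}$, \'etale invariance (Theorem \ref{thm:ddrsummary} (2)) gives $\calQ|_{U \cap V} \simeq 0$ since $U \cap V$ is smooth, so $\R\Gamma(X,\calQ) \simeq \R\Gamma(U,\calQ) \oplus \R\Gamma(V,\calQ)$, and the filtration argument identifies $\R\Gamma(U,\calQ) \simeq \calQ_x$. Thus it is enough to show $H^N(\calQ_x)$ is infinitely generated. Now $\calQ_x$ is computed by the triangle $\Fil^\conj_N(\dR_{\calO_{X,x}^h/k}) \to \dR_{\calO_{X,x}^h/k} \to \calQ_x$ (using the finite-length property of the graded pieces and that $\calO_{X,x}^h \otimes_{\calO_{X,x}} M \simeq M$ for finite length $M$, exactly as before), i.e., $\calQ_x \simeq \big(\dR_{\calO_{X,x}^h/k}\big)/\Fil^\conj_N$. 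Note $\dim(\calO_{X,x}) = N - 1$ for a low degree cone, so $N > \dim(\calO_{X,x})$ and Proposition \ref{prop:odpconjdiff} applies: it furnishes a $k$-linear splitting $\calQ_x \simeq \oplus_{n = N}^{\infty} \gr^\conj_n(\dR_{\calO_{X,x}^h/k}) \simeq \oplus_{n \geq N} \wedge^n L_{\calO_{X,x}^{(1),h}/k}[-n]$ (after passing to the colimit over the finite truncations, which is harmless since cohomology commutes with filtered colimits and each $H^N$ stabilises). Here the indexing runs from $n = \dim(A)+1 = N$; one should double-check whether the $n = N$ term contributes, but in any case the terms with $n > N$ suffice. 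Then $H^N$ of the summand $\wedge^n L_{\calO_{X,x}^{(1),h}/k}[-n]$ is $H^{-n+N}(\wedge^n L_{\calO_{X,x}^{(1),h}/k})$, which is non-zero for every $n > N$ by Lemma \ref{lem:wedgencc} (3). Since there are infinitely many such $n$, $H^N(\calQ_x)$ is infinitely generated over $k$, proving the claim for $H^N_\crys(X/k)$.

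For $H^{N-1}_\crys(X/k)$, I would exploit Lemma \ref{lem:wedgencc} (4) and (5): since $\dim(\calO_{X,x}) = N - 1 > 0$, for each $n > N$ there is an integer $0 < i < N$ with $H^{-n+N-i}(\wedge^n L_{\calO_{X,x}^{(1),h}/k}) \neq 0$. A priori the value of $i$ could vary with $n$, so I would invoke Remark \ref{rmk:degreeslowdegcone}: for an ordinary double point ($d=2$), or more generally, the precise computation in Lemma \ref{lem:cchomoghyp} (4) pins down $\wedge^n L_{A/k}[-n]$ as $M'(-nd) \oplus M'(-nd-d)[1]$ with $M' = M((N+1)(d-1))[-N-1]$, so the nonzero cohomological degrees of $\wedge^n L_{A/k}$ are precisely $-N-1+n$ and $-N+n$ (shifting indices: after the $[-n]$ normalisation these become $-N-1$ and $-N$, i.e. in $\wedge^n L_{A/k}$ they sit in degrees $-n+N$ and $-n+N-1$). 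Hence the integer $i$ in Lemma \ref{lem:wedgencc} (4) is forced to be $i = 1$ for every $n > N$, and $H^{-n+N-1}(\wedge^n L_{\calO_{X,x}^{(1),h}/k}) \neq 0$ for infinitely many $n$. Feeding this back into the splitting of $\calQ_x$ shows $H^{N-1}(\calQ_x)$ is infinite dimensional; combined with finiteness of $H^{N-1}(X^{(1)}, \Fil^\conj_N(\dR_{X/k}))$ and Theorem \ref{thm:ddrsummary} (3), this gives that $H^{N-1}_\crys(X/k)$ is infinitely generated.

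The main obstacle, and the only place real care is needed, is the uniformity of the index $i$ in the $H^{N-1}$ statement: Lemma \ref{lem:wedgencc} alone only gives \emph{some} $i \in (0,N)$ for each $n$, which is not enough to conclude infinite generation in a single fixed degree. The resolution is to use the sharper structural description in Lemma \ref{lem:cchomoghyp} (4) (valid precisely because a low degree cone is, \'etale locally, the cone on a smooth hypersurface), which shows $\wedge^n L_{A/k}$ has nonzero cohomology exactly in the two top degrees $-n+N$ and $-n+N-1$ whenever $M \neq 0$, so $i = 1$ always. A minor secondary point is the colimit bookkeeping (passing from the finite truncations $\Fil^\conj_m/\Fil^\conj_N$ to $\calQ_x$ itself), which is handled verbatim as in the proof of Proposition \ref{prop:metaex} since $U$ is affine and cohomology commutes with the relevant filtered colimits.
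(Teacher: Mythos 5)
Your proof is correct and follows essentially the same route as the paper's own: reduce to the stalk $\calQ_x \simeq \calQ'$ via the Mayer--Vietoris/localisation step of Proposition \ref{prop:metaex}, split $\calQ'$ as $\oplus_{n>N}\wedge^n L_{\calO_{X,x}^{(1),h}/k}[-n]$ using Proposition \ref{prop:odpconjdiff}, and read off the two nonvanishing cohomological degrees $-n+N$ and $-n+N-1$ of $\wedge^n L$ from Lemma \ref{lem:cchomoghyp} (4) --- which is precisely how the paper handles the uniformity of the index $i$ that you correctly flag as the one delicate point (the ``shift by $1$'' in embedding dimension). The only blemish is a transient sign slip in your intermediate bookkeeping (``$-N-1+n$ and $-N+n$''); your final placement of the nonzero cohomology in degrees $-n+N$ and $-n+N-1$ is the correct one and is all that the argument uses.
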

\begin{proof}
We combine the proof strategy of Proposition \ref{prop:metaex} with Proposition \ref{prop:odpconjdiff}. More precisely, following the proof of Proposition \ref{prop:metaex} (1), it suffices to show that $\calQ'$ is infinitely generated when regarded as a complex of $k$-vector spaces.  Now $\calQ'$ admits an increasing bounded below separated exhaustive filtration with graded pieces given by $\gr^\conj_n(\dR_{\calO_{X,x}^h/k})$ for $n > N$. By Proposition \ref{prop:odpconjdiff}, there is a (non-canonical) isomorphism
\[ \calQ'  \simeq \oplus_{n > N} \wedge^n L_{\calO_{X,x}^{(1),h}/k}[-n].\]
The rest follows from Lemma \ref{lem:cchomoghyp} (4) (note that embedding dimension in {\em loc.\ cit.} is $N+1$, so we must shift by $1$).
\end{proof}

We can now give the promised example:

\begin{example}
\label{ex:odp}
Let $X$ be any proper lci variety that contains an ordinary double point $x \in X(k)$ of embedding dimension $N$ with $p$ odd; for example, we could take $X$ to be the projective cone over a smooth quadric in $\P^{N-1}$. Then $H^N_\crys(X/k)$ and $H^{N-1}_\crys(X/k)$ are infinitely generated by Corollary \ref{cor:lowdegcone}. 
\end{example}

All examples given so far have rational singularities, so we record an example that is not even log canonical. 

\begin{example}
\label{ex:curvecone}
Let $X$ be any proper lci surface that contains a closed point $x \in X(k)$ with $\calO_{X,x}^h$ isomorphic to the henselisation at the vertex of the cone over a smooth curve $C \subset \P^2$ of degree $\leq 5$. If $p \geq 7$,  then $H^3_\crys(X/k)$ and $H^{2}_\crys(X/k)$ are infinitely generated by Corollary \ref{cor:lowdegcone}. 
\end{example}

\begin{remark}
We do not know whether the ordinary double point from Example \ref{ex:odp} admits a lift to $W_2$ compatible with Frobenius in arbitrary dimensions; similarly for the cones in Example \ref{ex:curvecone} (except for ordinary elliptic curves).
\end{remark}

We end with a question that has not been answered in this note.

\begin{question}
\label{ques:cryscohfgq}
Do there exist singular proper $k$-varieties with finite dimensional crystalline cohomology over $k$? Do there exist singular finite type $k$-algebras $A$ whose crystalline cohomology relative to $k$ is finitely generated over $A^{(1)}$?
\end{question}

\bibliography{my}

\end{document}